\newtheorem{theorem}{Theorem}
\theoremstyle{plain}
\newtheorem{corollary}{Corollary}
\newtheorem{lemma}{Lemma}
\newtheorem{remark}{Remark}
\numberwithin{equation}{section}
\begin{document}
\title[Hypo-$q$-Norms on a Cartesian Product ]{Hypo-$q$-Norms on a Cartesian
Product of Normed Linear Spaces}
\author[S. S. Dragomir]{Silvestru Sever Dragomir$^{1,2}$}
\address{$^{1}$Mathematics, College of Engineering \& Science\\
Victoria University, PO Box 14428\\
Melbourne City, MC 8001, Australia.}
\email{sever.dragomir@vu.edu.au}
\urladdr{http://rgmia.org/dragomir}
\address{$^{2}$DST-NRF Centre of Excellence \\
in the Mathematical and Statistical Sciences, School of Computer Science \&
Applied Mathematics, University of the Witwatersrand, Private Bag 3,
Johannesburg 2050, South Africa}
\subjclass{46C05; 26D15.}
\keywords{Normed spaces, Cartesian products of normed spaces, Inequalities,
Reverse inequalities, Shisha-Mond, Birnacki et al. and Gr\"{u}ss type
inequalities}

\begin{abstract}
In this paper we introduce the hypo-$q$-norms on a Cartesian product of
normed linear spaces. A representation of these norms in terms of bounded
linear functionals of norm less than one, the equivalence with the $q$-norms
on a Cartesian product and some reverse inequalities obtained via the scalar
Shisha-Mond, Birnacki et al. and other Gr\"{u}ss type inequalities are also
given.
\end{abstract}

\maketitle

\section{Introduction}

Let $\left( E,\left\Vert \cdot \right\Vert \right) $ be a normed linear
space over the real or complex number field $\mathbb{K}$. On $\mathbb{K}^{n}$
endowed with the canonical linear structure we consider a norm $\left\Vert
\cdot \right\Vert _{n}$ and the unit ball 
\begin{equation*}
\mathbb{B}\left( \left\Vert \cdot \right\Vert _{n}\right) :=\left\{ \mathbf{%
\lambda }=\left( \lambda _{1},\dots ,\lambda _{n}\right) \in \mathbb{K}%
^{n}|\left\Vert \lambda \right\Vert _{n}\leq 1\right\} .
\end{equation*}%
As an example of such norms we should mention the usual $p$-\textit{norms}%
\begin{equation}
\left\Vert \mathbf{\lambda }\right\Vert _{n,p}:=\left\{ 
\begin{array}{ll}
\max \left\{ \left\vert \lambda _{1}\right\vert ,\dots ,\left\vert \lambda
_{n}\right\vert \right\} & \text{if \ }p=\infty ; \\ 
&  \\ 
\left( \sum_{k=1}^{n}\left\vert \lambda _{k}\right\vert ^{p}\right) ^{\frac{1%
}{p}} & \text{if \ }p\in \lbrack 1,\infty ).%
\end{array}%
\right.  \label{e.1.1}
\end{equation}%
The \textit{Euclidean norm} is obtained for $p=2$, i.e.,%
\begin{equation*}
\left\Vert \mathbf{\lambda }\right\Vert _{n,2}=\left(
\sum_{k=1}^{n}\left\vert \lambda _{k}\right\vert ^{2}\right) ^{\frac{1}{2}}.
\end{equation*}%
It is well known that on $E^{n}:=E\times \cdots \times E$ endowed with the
canonical linear structure we can define the following $p$-\textit{norms}:%
\begin{equation}
\left\Vert \mathbf{x}\right\Vert _{n,p}:=\left\{ 
\begin{array}{ll}
\max \left\{ \left\Vert x_{1}\right\Vert ,\dots ,\left\Vert x_{n}\right\Vert
\right\} & \text{if \ }p=\infty ; \\ 
&  \\ 
\left( \sum_{k=1}^{n}\left\Vert x_{k}\right\Vert ^{p}\right) ^{\frac{1}{p}}
& \text{if \ }p\in \lbrack 1,\infty );%
\end{array}%
\right.  \label{e.1.2}
\end{equation}%
where $\mathbf{x}=\left( x_{1},\dots ,x_{n}\right) \in E^{n}.$

Following \cite{SSD1}, for a given norm $\left\Vert \cdot \right\Vert _{n}$
on $\mathbb{K}^{n},$ we define the functional $\left\Vert \cdot \right\Vert
_{h,n}:E^{n}\rightarrow \lbrack 0,\infty )$ given by 
\begin{equation}
\left\Vert \mathbf{x}\right\Vert _{h,n}:=\sup_{\lambda \in B\left(
\left\Vert \cdot \right\Vert _{n}\right) }\left\Vert \sum_{j=1}^{n}\lambda
_{j}x_{j}\right\Vert ,  \label{e.1.3}
\end{equation}%
where $\mathbf{x}=\left( x_{1},\dots ,x_{n}\right) \in E^{n}.$

It is easy to see, by the properties of the norm $\left\Vert \cdot
\right\Vert ,$ that:

\begin{enumerate}
\item[(i)] $\left\Vert \mathbf{x}\right\Vert _{h,n}\geq 0$ for any $\mathbf{x%
}\in E^{n};$

\item[(ii)] $\left\Vert \mathbf{x}+\mathbf{y}\right\Vert _{h,n}\leq
\left\Vert \mathbf{x}\right\Vert _{h,n}+\left\Vert \mathbf{y}\right\Vert
_{h,n}$ for any $\mathbf{x},$ $\mathbf{y}\in E^{n};$

\item[(iii)] $\left\Vert \alpha \mathbf{x}\right\Vert _{h,n}=\left\vert
\alpha \right\vert \left\Vert \mathbf{x}\right\Vert _{h,n}$ for each $\alpha
\in \mathbb{K}$ and $\mathbf{x}\in E^{n};$
\end{enumerate}

and therefore $\left\Vert \cdot \right\Vert _{h,n}$ is a \textit{semi-norm }%
on $E^{n}.$ This will be called the \textit{hypo-semi-norm} generated by the
norm $\left\Vert \cdot \right\Vert _{n}$ on $E^{n}.$

We observe that $\left\Vert \mathbf{x}\right\Vert _{h,n}=0$ if and only if $%
\sum_{j=1}^{n}\lambda _{j}x_{j}=0$ for any $\left( \lambda _{1},\dots
,\lambda _{n}\right) \in B\left( \left\Vert \cdot \right\Vert _{n}\right) .$
If there exists $\lambda _{1}^{0},\dots ,\lambda _{n}^{0}\neq 0$ such that $%
\left( \lambda _{1}^{0},0,\dots ,0\right) ,$ $\left( 0,\lambda
_{2}^{0},\dots ,0\right) ,\dots ,$ $\left( 0,0,\dots ,\lambda
_{n}^{0}\right) \in B\left( \left\Vert \cdot \right\Vert _{n}\right) $ then
the semi-norm generated by $\left\Vert \cdot \right\Vert _{n}$ is a \textit{%
norm} on $E^{n}.$

If by $\mathbb{B}_{n,p}$ with $p\in \left[ 1,\infty \right] $ we denote the
balls generated by the $p$-norms $\left\Vert \cdot \right\Vert _{n,p}$ on $%
\mathbb{K}^{n},$ then we can obtain the following \textit{hypo-}$q$-\textit{%
norms} on $E^{n}:$%
\begin{equation}
\left\Vert \mathbf{x}\right\Vert _{h,n,q}:=\sup_{\mathbf{\lambda }\in 
\mathbb{B}_{n,p}}\left\Vert \sum_{j=1}^{n}\lambda _{j}x_{j}\right\Vert ,
\label{e.1.4}
\end{equation}%
with $q>1$ and $\frac{1}{q}+\frac{1}{p}=1$ if $p>1$, $q=1$ if $p=\infty $
and $q=\infty $ if $p=1.$

For $p=2,$ we have the Euclidean ball in $\mathbb{K}^{n},$ which we denote
by $\mathbb{B}_{n},$ $\mathbb{B}_{n}=\left\{ \mathbf{\lambda }=\left(
\lambda _{1},\dots ,\lambda _{n}\right) \in \mathbb{K}^{n}\left\vert
\sum_{i=1}^{n}\left\vert \lambda _{i}\right\vert ^{2}\right. \leq 1\right\} $
that generates the \textit{hypo-Euclidean norm} on $E^{n},$ i.e.,%
\begin{equation}
\left\Vert \mathbf{x}\right\Vert _{h,e}:=\sup_{\mathbf{\lambda }\in \mathbb{B%
}_{n}}\left\Vert \sum_{j=1}^{n}\lambda _{j}x_{j}\right\Vert .  \label{e.1.5}
\end{equation}

Moreover, if $E=H,$ $H$ is a inner product space over $\mathbb{K}$, then the 
\textit{hypo-Euclidean norm} on $H^{n}$ will be denoted simply by%
\begin{equation}
\left\Vert \mathbf{x}\right\Vert _{e}:=\sup_{\mathbf{\lambda }\in \mathbb{B}%
_{n}}\left\Vert \sum_{j=1}^{n}\lambda _{j}x_{j}\right\Vert .  \label{e.1.6}
\end{equation}

Let $\left( H;\left\langle \cdot ,\cdot \right\rangle \right) $ be a Hilbert
space over $\mathbb{K}$ and $n\in \mathbb{N}$, $n\geq 1.$ In the Cartesian
product $H^{n}:=H\times \cdots \times H,$ for the $n-$tuples of vectors $%
\mathbf{x}=\left( x_{1},\dots ,x_{n}\right) $, $\mathbf{y}=\left(
y_{1},\dots ,y_{n}\right) \in H^{n},$ we can define the inner product $%
\left\langle \cdot ,\cdot \right\rangle $ by%
\begin{equation}
\left\langle \mathbf{x},\mathbf{y}\right\rangle :=\sum_{j=1}^{n}\left\langle
x_{j},y_{j}\right\rangle ,\qquad \mathbf{x},\text{ }\mathbf{y}\in H^{n},
\label{e.1.7}
\end{equation}%
which generates the Euclidean norm $\left\Vert \cdot \right\Vert _{2}$ on $%
H^{n},$ i.e.,%
\begin{equation}
\left\Vert \mathbf{x}\right\Vert _{2}:=\left( \sum_{j=1}^{n}\left\Vert
x_{j}\right\Vert ^{2}\right) ^{\frac{1}{2}},\qquad \mathbf{x}\in H^{n}.
\label{e.1.8}
\end{equation}

The following result established in \cite{SSD1} connects the usual Euclidean
norm $\left\Vert \cdot \right\Vert $ with the hypo-Euclidean norm $%
\left\Vert \cdot \right\Vert _{e}.$

\begin{theorem}[Dragomir, 2007, \protect\cite{SSD1}]
\label{t.A}For any $\mathbf{x}\in H^{n}$ we have the inequalities%
\begin{equation}
\frac{1}{\sqrt{n}}\left\Vert \mathbf{x}\right\Vert \leq \left\Vert \mathbf{x}%
\right\Vert _{e}\leq \left\Vert \mathbf{x}\right\Vert _{2},  \label{e.1.9}
\end{equation}%
i.e., $\left\Vert \cdot \right\Vert _{2}$ and $\left\Vert \cdot \right\Vert
_{e}$ are equivalent norms on $H^{n}.$
\end{theorem}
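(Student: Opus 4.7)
The plan is to prove the two inequalities in \eqref{e.1.9} independently. I read $\left\Vert \mathbf{x}\right\Vert $ on the left-hand side as the Hilbert-space norm \eqref{e.1.8} on $H^{n}$ generated by the inner product \eqref{e.1.7}, i.e., $\left\Vert \mathbf{x}\right\Vert =\left\Vert \mathbf{x}\right\Vert _{2}$; this is consistent with the closing assertion that $\left\Vert \cdot \right\Vert _{2}$ and $\left\Vert \cdot \right\Vert _{e}$ are the two equivalent norms being compared.

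For the upper bound $\left\Vert \mathbf{x}\right\Vert _{e}\leq \left\Vert \mathbf{x}\right\Vert _{2}$, I would fix an arbitrary $\mathbf{\lambda }\in \mathbb{B}_{n}$ and chain the triangle inequality in $H$ with the Cauchy--Schwarz inequality in $\mathbb{K}^{n}$:
\[
\left\Vert \sum_{j=1}^{n}\lambda _{j}x_{j}\right\Vert \leq \sum_{j=1}^{n}\left\vert \lambda _{j}\right\vert \left\Vert x_{j}\right\Vert \leq \left( \sum_{j=1}^{n}\left\vert \lambda _{j}\right\vert ^{2}\right) ^{1/2}\left( \sum_{j=1}^{n}\left\Vert x_{j}\right\Vert ^{2}\right) ^{1/2}\leq \left\Vert \mathbf{x}\right\Vert _{2},
\]
the last step using $\mathbf{\lambda }\in \mathbb{B}_{n}$. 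Taking the supremum over $\mathbf{\lambda }$ gives the claim.

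For the lower bound $\frac{1}{\sqrt{n}}\left\Vert \mathbf{x}\right\Vert _{2}\leq \left\Vert \mathbf{x}\right\Vert _{e}$, I would test the supremum \eqref{e.1.6} at the canonical unit vectors $e_{k}=(0,\dots ,0,1,0,\dots ,0)\in \mathbb{K}^{n}$. Since $\left\Vert e_{k}\right\Vert _{n,2}=1$, each $e_{k}$ belongs to $\mathbb{B}_{n}$, and direct substitution yields $\left\Vert \mathbf{x}\right\Vert _{e}\geq \left\Vert x_{k}\right\Vert $ for every $k\in \{1,\dots ,n\}$, hence $\left\Vert \mathbf{x}\right\Vert _{e}\geq \max_{1\leq k\leq n}\left\Vert x_{k}\right\Vert $. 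Combining this with the elementary estimate $\sum_{k=1}^{n}\left\Vert x_{k}\right\Vert ^{2}\leq n\max_{k}\left\Vert x_{k}\right\Vert ^{2}$ finishes the proof.

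I do not expect any real obstacle: the upper inequality is a textbook application of Cauchy--Schwarz, and the lower one reduces to evaluating the defining supremum at the obvious extreme points of $\mathbb{B}_{n}$ supplied by the canonical basis. A sharper analysis would identify $\left\Vert \mathbf{x}\right\Vert _{e}^{2}$ with the largest eigenvalue of the Gram matrix $\left[ \left\langle x_{i},x_{j}\right\rangle \right] _{i,j=1}^{n}$, but this spectral refinement is not required for \eqref{e.1.9}.
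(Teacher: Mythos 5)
Your proof is correct, and it takes a different (and more elementary) route than anything in the paper. Note first that the paper does not prove Theorem \ref{t.A} at all: it is recalled from \cite{SSD1}, and Remark \ref{r.2.1} records that the original argument there was ``more difficult'', relying on the rotation-invariant normalised positive Borel measure on the unit sphere; the present paper instead recovers the inequality (in the greater generality of an arbitrary normed space and arbitrary $q$) as the case $q=2$ of Corollary \ref{c.2.1}, whose proof goes through the dual representation of Theorem \ref{t.2.1}, i.e.\ Hahn--Banach and the identity $\left\Vert \mathbf{x}\right\Vert _{h,e}=\sup_{\left\Vert f\right\Vert \leq 1}\left( \sum_{j=1}^{n}\left\vert f\left( x_{j}\right) \right\vert ^{2}\right) ^{1/2}$, followed by $\left\vert f\left( x_{j}\right) \right\vert \leq \left\Vert f\right\Vert \left\Vert x_{j}\right\Vert $ for the upper bound and the chain $\frac{1}{n^{1/q}}\left\Vert \mathbf{x}\right\Vert _{n,q}\leq \left\Vert \mathbf{x}\right\Vert _{n,\infty }\leq \left\Vert \mathbf{x}\right\Vert _{h,n,q}$ for the lower one. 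You instead work directly with the defining supremum \eqref{e.1.6}: triangle inequality plus Cauchy--Schwarz in $\mathbb{K}^{n}$ for $\left\Vert \mathbf{x}\right\Vert _{e}\leq \left\Vert \mathbf{x}\right\Vert _{2}$, and evaluation at the canonical unit vectors $e_{k}\in \mathbb{B}_{n}$ to get $\left\Vert \mathbf{x}\right\Vert _{e}\geq \max_{k}\left\Vert x_{k}\right\Vert \geq \frac{1}{\sqrt{n}}\left\Vert \mathbf{x}\right\Vert _{2}$; structurally this mirrors the Corollary \ref{c.2.1} estimates but bypasses duality and the representation theorem entirely, which is exactly what the Hilbert (or indeed any normed) setting allows. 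Your reading of $\left\Vert \mathbf{x}\right\Vert $ on the left of \eqref{e.1.9} as $\left\Vert \mathbf{x}\right\Vert _{2}$ is the right one, as the equivalence claim confirms. What the paper's heavier machinery buys is the representation \eqref{e.2.1} itself and the reverse inequalities of Sections 3--4, none of which are needed for \eqref{e.1.9}; your spectral remark (that $\left\Vert \mathbf{x}\right\Vert _{e}^{2}$ is the largest eigenvalue of the Gram matrix) is correct but, as you say, superfluous here.
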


The following representation result for the hypo-Euclidean norm plays a key
role in obtaining various bounds for this norm:

\begin{theorem}[Dragomir, 2007, \protect\cite{SSD1}]
\label{t.B}For any $\mathbf{x}\in H^{n}$ with $\mathbf{x}=\left( x_{1},\dots
,x_{n}\right) ,$ we have 
\begin{equation}
\left\Vert \mathbf{x}\right\Vert _{e}=\sup_{\left\Vert x\right\Vert \leq
1}\left( \sum_{j=1}^{n}\left\vert \left\langle x,x_{j}\right\rangle
\right\vert ^{2}\right) ^{\frac{1}{2}}.  \label{e.1.10}
\end{equation}
\end{theorem}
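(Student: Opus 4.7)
The plan is to unfold both suprema using the duality formula $\|y\| = \sup_{\|x\|\le 1}|\langle x,y\rangle|$ valid in any Hilbert space, and then interchange the two suprema (which is always permissible for a pointwise supremum of a nonnegative quantity). Applying this formula to $y=\sum_{j=1}^n\lambda_j x_j$ gives
\begin{equation*}
\Bigl\|\sum_{j=1}^n\lambda_j x_j\Bigr\| = \sup_{\|x\|\le 1}\Bigl|\Bigl\langle x,\sum_{j=1}^n\lambda_j x_j\Bigr\rangle\Bigr| = \sup_{\|x\|\le 1}\Bigl|\sum_{j=1}^n\overline{\lambda_j}\langle x,x_j\rangle\Bigr|,
\end{equation*}
so that
\begin{equation*}
\|\mathbf{x}\|_e = \sup_{\lambda\in\mathbb{B}_n}\sup_{\|x\|\le 1}\Bigl|\sum_{j=1}^n\overline{\lambda_j}\langle x,x_j\rangle\Bigr| = \sup_{\|x\|\le 1}\sup_{\lambda\in\mathbb{B}_n}\Bigl|\sum_{j=1}^n\overline{\lambda_j}\langle x,x_j\rangle\Bigr|.
\end{equation*}

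The next step is to evaluate the inner supremum in $\lambda$. For each fixed $x$ with $\|x\|\le 1$, the Cauchy--Schwarz inequality in $\mathbb{K}^n$ yields
\begin{equation*}
\Bigl|\sum_{j=1}^n\overline{\lambda_j}\langle x,x_j\rangle\Bigr| \le \Bigl(\sum_{j=1}^n|\lambda_j|^2\Bigr)^{1/2}\Bigl(\sum_{j=1}^n|\langle x,x_j\rangle|^2\Bigr)^{1/2} \le \Bigl(\sum_{j=1}^n|\langle x,x_j\rangle|^2\Bigr)^{1/2},
\end{equation*}
and equality is attained (when the right-hand side is nonzero) by the explicit choice $\lambda_j=\langle x,x_j\rangle/\bigl(\sum_{k=1}^n|\langle x,x_k\rangle|^2\bigr)^{1/2}$, which lies in $\mathbb{B}_n$. (If the right-hand side is zero, the identity holds trivially.) Substituting back gives the claimed representation (\ref{e.1.10}).

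There is no real obstacle here: the argument is a double supremum combined with the Cauchy--Schwarz equality case. The only subtlety is being careful with the complex conjugation arising from sesquilinearity of the inner product (the convention determines whether one writes $\lambda_j$ or $\overline{\lambda_j}$), but since the expression is taken in absolute value and $\mathbb{B}_n$ is invariant under coordinatewise conjugation, this has no effect on the final identity.
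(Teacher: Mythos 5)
Your argument is correct, and it is essentially the approach this paper itself uses: its Theorem~\ref{t.2.1} proves the general normed-space version of this representation by the same two steps, namely the dual formula $\left\Vert u\right\Vert =\sup_{\left\Vert f\right\Vert \leq 1}\left\vert f\left( u\right) \right\vert$ (your inner-product duality, via Hahn--Banach in general), interchange of the two suprema, and the H\"{o}lder/Cauchy--Schwarz extremal choice of $\lambda$, of which your computation is exactly the $p=q=2$ case. Specializing (\ref{e.2.2.a}) to a Hilbert space through Riesz representation gives (\ref{e.1.10}), so your proof matches the paper's route rather than the heavier spherical-measure argument of the original reference.
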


Motivated by the above results, in this paper we introduce the hypo-$q$%
-norms on a Cartesian product of normed linear spaces. A representation of
these norms in terms of bounded linear functionals of norm less than one,
the equivalence with the $q$-norms on a Cartesian product and some reverse
inequalities obtained via the scalar Shisha-Mond, Birnacki et al. and other
Gr\"{u}ss type inequalities are also given.

\section{General Results}

Let $\left( E,\left\Vert \cdot \right\Vert \right) $ be a normed linear
space over the real or complex number field $\mathbb{K}$. We denote by $%
E^{\ast }$ its dual space endowed with the norm $\left\Vert \cdot
\right\Vert $ defined by 
\begin{equation*}
\left\Vert f\right\Vert :=\sup_{\left\Vert x\right\Vert \leq 1}\left\vert
f\left( x\right) \right\vert <\infty \text{, where }f\in E^{\ast }.
\end{equation*}

We have the following representation result for the \textit{hypo-}$q$-%
\textit{norms} on $E^{n}$.

\begin{theorem}
\label{t.2.1}Let $\left( E,\left\Vert \cdot \right\Vert \right) $ be a
normed linear space over the real or complex number field $\mathbb{K}$. For
any $\mathbf{x}\in E^{n}$ with $\mathbf{x}=\left( x_{1},\dots ,x_{n}\right)
, $ we have%
\begin{equation}
\left\Vert \mathbf{x}\right\Vert _{h,n,q}=\sup_{\left\Vert f\right\Vert \leq
1}\left\{ \left( \sum_{j=1}^{n}\left\vert f\left( x_{j}\right) \right\vert
^{q}\right) ^{1/q}\right\}  \label{e.2.1}
\end{equation}%
where $p,$ $q>1$ with $\frac{1}{p}+\frac{1}{q}=1$, 
\begin{equation}
\left\Vert \mathbf{x}\right\Vert _{h,n,1}=\sup_{\left\Vert f\right\Vert \leq
1}\left\{ \sum_{j=1}^{n}\left\vert f\left( x_{j}\right) \right\vert \right\}
\label{e.2.2}
\end{equation}%
and%
\begin{equation}
\left\Vert \mathbf{x}\right\Vert _{h,n,\infty }=\left\Vert \mathbf{x}%
\right\Vert _{n,\infty }=\max_{j\in \left\{ 1,...,n\right\} }\left\{
\left\Vert x_{j}\right\Vert \right\} .  \label{e.2.2.0}
\end{equation}

In particular,%
\begin{equation}
\left\Vert \mathbf{x}\right\Vert _{h,e}=\sup_{\left\Vert f\right\Vert \leq
1}\left\{ \left( \sum_{j=1}^{n}\left\vert f\left( x_{j}\right) \right\vert
^{2}\right) ^{1/2}\right\} .  \label{e.2.2.a}
\end{equation}
\end{theorem}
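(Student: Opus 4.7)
The plan is to reduce the supremum defining $\|\mathbf{x}\|_{h,n,q}$ to an iterated supremum via the Hahn--Banach duality formula
\[
\|y\|=\sup_{\|f\|\le 1}|f(y)|,\qquad y\in E,
\]
and then recognize the inner expression as the classical $\ell^q$/$\ell^p$ duality on $\mathbb{K}^n$. Concretely, starting from the definition (\ref{e.1.4}), I would write
\[
\left\Vert \sum_{j=1}^{n}\lambda_{j}x_{j}\right\Vert
=\sup_{\|f\|\le 1}\left| f\!\left(\sum_{j=1}^{n}\lambda_{j}x_{j}\right)\right|
=\sup_{\|f\|\le 1}\left|\sum_{j=1}^{n}\lambda_{j}f(x_{j})\right|,
\]
using the linearity of $f\in E^{\ast}$. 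Plugging this into (\ref{e.1.4}) and interchanging the two suprema (always permitted for sup--sup) gives
\[
\|\mathbf{x}\|_{h,n,q}=\sup_{\|f\|\le 1}\;\sup_{\lambda\in\mathbb{B}_{n,p}}\left|\sum_{j=1}^{n}\lambda_{j}f(x_{j})\right|.
\]

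The next step is to compute the inner supremum. For fixed $f$, the vector $\bigl(f(x_{1}),\dots,f(x_{n})\bigr)\in\mathbb{K}^{n}$ is a bounded linear functional on $(\mathbb{K}^{n},\|\cdot\|_{n,p})$ via $\lambda\mapsto\sum_{j}\lambda_{j}f(x_{j})$, and the standard $\ell^{p}$--$\ell^{q}$ duality yields
\[
\sup_{\lambda\in\mathbb{B}_{n,p}}\left|\sum_{j=1}^{n}\lambda_{j}f(x_{j})\right|
=\bigl\|(f(x_{1}),\dots,f(x_{n}))\bigr\|_{n,q},
\]
which equals $\bigl(\sum_{j=1}^{n}|f(x_{j})|^{q}\bigr)^{1/q}$ for $q\in(1,\infty)$, $\sum_{j=1}^{n}|f(x_{j})|$ for $q=1$ (that is, $p=\infty$), and $\max_{j}|f(x_{j})|$ for $q=\infty$ (that is, $p=1$). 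This delivers (\ref{e.2.1}) and (\ref{e.2.2}) immediately, and specializing to $q=2$ gives (\ref{e.2.2.a}).

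For the remaining identity (\ref{e.2.2.0}), with $q=\infty$ one obtains
\[
\|\mathbf{x}\|_{h,n,\infty}
=\sup_{\|f\|\le 1}\max_{j\in\{1,\dots,n\}}|f(x_{j})|
=\max_{j\in\{1,\dots,n\}}\sup_{\|f\|\le 1}|f(x_{j})|
=\max_{j\in\{1,\dots,n\}}\|x_{j}\|,
\]
where the swap of $\sup$ and $\max$ over a finite set is trivial, and the last equality is a second application of Hahn--Banach.

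The proof is essentially a sup-swap, so the only real obstacle is bookkeeping: one must invoke Hahn--Banach to express each $\|x_{j}\|$ (and $\|\sum\lambda_{j}x_{j}\|$) via norm-one functionals, and one must cleanly dispatch the three cases $q\in(1,\infty)$, $q=1$, $q=\infty$, making sure the boundary $\ell^{p}$/$\ell^{q}$ duality is cited correctly (in particular the $\ell^{1}$--$\ell^{\infty}$ pairing for the last case). No estimate is lost in either direction, so equality rather than merely inequality is obtained throughout.
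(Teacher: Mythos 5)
Your proof is correct and follows essentially the same route as the paper: express $\left\Vert \sum_{j}\lambda _{j}x_{j}\right\Vert$ via Hahn--Banach, interchange the two suprema, and evaluate the inner supremum by $\ell ^{p}$--$\ell ^{q}$ duality on $\mathbb{K}^{n}$ in the three cases $q\in (1,\infty )$, $q=1$, $q=\infty $. The only difference is that the paper proves the finite-dimensional duality identity by hand (H\"{o}lder's inequality plus an explicit extremal choice of $\alpha $), whereas you cite it as standard, which is a matter of exposition rather than of substance.
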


\begin{proof}
Using H\"{o}lder's discrete inequality for $p,$ $q>1$ and $\frac{1}{p}+\frac{%
1}{q}=1$ we have 
\begin{equation*}
\left\vert \sum_{j=1}^{n}\alpha _{j}\beta _{j}\right\vert \leq \left(
\sum_{j=1}^{n}\left\vert \alpha _{j}\right\vert ^{p}\right) ^{1/p}\left(
\sum_{j=1}^{n}\left\vert \beta _{j}\right\vert ^{q}\right) ^{1/q},
\end{equation*}%
which implies that 
\begin{equation}
\sup_{\left\Vert \alpha \right\Vert _{p}\leq 1}\left\vert
\sum_{j=1}^{n}\alpha _{j}\beta _{j}\right\vert \leq \left\Vert \beta
\right\Vert _{q}  \label{e.2.3}
\end{equation}%
where $\alpha =\left( \alpha _{1},\dots ,\alpha _{n}\right) $ and $\beta
=\left( \beta _{1},\dots ,\beta _{n}\right) .$

For $\left( \beta _{1},\dots ,\beta _{n}\right) \neq 0,$ consider $\alpha
=\left( \alpha _{1},\dots ,\alpha _{n}\right) $ with%
\begin{equation*}
\alpha _{j}:=\frac{\overline{\beta _{j}}\left\vert \beta _{j}\right\vert
^{q-2}}{\left( \sum_{k=1}^{n}\left\vert \beta _{k}\right\vert ^{q}\right)
^{1/p}}
\end{equation*}%
for those $j$ for which $\beta _{j}\neq 0$ and $\alpha _{j}=0,$ for the rest.

We observe that 
\begin{align*}
\left\vert \sum_{j=1}^{n}\alpha _{j}\beta _{j}\right\vert & =\left\vert
\sum_{j=1}^{n}\frac{\overline{\beta _{j}}\left\vert \beta _{j}\right\vert
^{q-2}}{\left( \sum_{k=1}^{n}\left\vert \beta _{k}\right\vert ^{q}\right)
^{1/p}}\beta _{j}\right\vert =\frac{\sum_{j=1}^{n}\left\vert \beta
_{j}\right\vert ^{q}}{\left( \sum_{k=1}^{n}\left\vert \beta _{k}\right\vert
^{q}\right) ^{1/p}} \\
& =\left( \sum_{j=1}^{n}\left\vert \beta _{j}\right\vert ^{q}\right)
^{1/q}=\left\Vert \beta \right\Vert _{q}
\end{align*}%
and%
\begin{align*}
\left\Vert \alpha \right\Vert _{p}^{p}& =\sum_{j=1}^{n}\left\vert \alpha
_{j}\right\vert ^{p}=\sum_{j=1}^{n}\frac{\left\vert \overline{\beta _{j}}%
\left\vert \beta _{j}\right\vert ^{q-2}\right\vert ^{p}}{\left(
\sum_{k=1}^{n}\left\vert \beta _{k}\right\vert ^{q}\right) }=\sum_{j=1}^{n}%
\frac{\left( \left\vert \beta _{j}\right\vert ^{q-1}\right) ^{p}}{\left(
\sum_{k=1}^{n}\left\vert \beta _{k}\right\vert ^{q}\right) } \\
& =\sum_{j=1}^{n}\frac{\left\vert \beta _{j}\right\vert ^{qp-p}}{\left(
\sum_{k=1}^{n}\left\vert \beta _{k}\right\vert ^{q}\right) }=\sum_{j=1}^{n}%
\frac{\left\vert \beta _{j}\right\vert ^{q}}{\left( \sum_{k=1}^{n}\left\vert
\beta _{k}\right\vert ^{q}\right) }=1.
\end{align*}%
Therefore, by (\ref{e.2.3}) we have the representation%
\begin{equation}
\sup_{\left\Vert \alpha \right\Vert _{p}\leq 1}\left\vert
\sum_{j=1}^{n}\alpha _{j}\beta _{j}\right\vert =\left\Vert \beta \right\Vert
_{q}  \label{e.2.4}
\end{equation}%
for any $\beta =\left( \beta _{1},\dots ,\beta _{n}\right) \in \mathbb{K}%
^{n} $.

By Hahn-Banach theorem, we have for any $u\in E,$ $u\neq 0$ that%
\begin{equation}
\left\Vert u\right\Vert =\sup_{\left\Vert f\right\Vert \leq 1}\left\vert
f\left( u\right) \right\vert .  \label{e.2.4.1}
\end{equation}

Let $\alpha =\left( \alpha _{1},\dots ,\alpha _{n}\right) \in \mathbb{K}^{n}$
and $\mathbf{x}\in E^{n}$ with $\mathbf{x}=\left( x_{1},\dots ,x_{n}\right)
. $ Then by (\ref{e.2.4.1}) we have%
\begin{equation}
\left\Vert \sum_{j=1}^{n}\alpha _{j}x_{j}\right\Vert =\sup_{\left\Vert
f\right\Vert \leq 1}\left\vert f\left( \sum_{j=1}^{n}\alpha _{j}x_{j}\right)
\right\vert =\sup_{\left\Vert f\right\Vert \leq 1}\left\vert
\sum_{j=1}^{n}\alpha _{j}f\left( x_{j}\right) \right\vert .  \label{e.2.4.2}
\end{equation}%
By taking the supremum in this equality we have%
\begin{align*}
\sup_{\left\Vert \alpha \right\Vert _{p}\leq 1}\left\Vert
\sum_{j=1}^{n}\alpha _{j}x_{j}\right\Vert & =\sup_{\left\Vert \alpha
\right\Vert _{p}\leq 1}\left( \sup_{\left\Vert f\right\Vert \leq
1}\left\vert \sum_{j=1}^{n}\alpha _{j}f\left( x_{j}\right) \right\vert
\right) \\
& =\sup_{\left\Vert f\right\Vert \leq 1}\left( \sup_{\left\Vert \alpha
\right\Vert _{p}\leq 1}\left\vert \sum_{j=1}^{n}\alpha _{j}f\left(
x_{j}\right) \right\vert \right) =\sup_{\left\Vert f\right\Vert \leq
1}\left( \sum_{j=1}^{n}\left\vert f\left( x_{j}\right) \right\vert
^{q}\right) ^{1/2},
\end{align*}%
where for the last equality we used the representation (\ref{e.2.4}).

This proves (\ref{e.2.1}).

Using the properties of the modulus, we have%
\begin{equation*}
\left\vert \sum_{j=1}^{n}\alpha _{j}\beta _{j}\right\vert \leq \max_{j\in
\left\{ 1,...,n\right\} }\left\vert \alpha _{j}\right\vert
\sum_{j=1}^{n}\left\vert \beta _{j}\right\vert
\end{equation*}%
which implies that%
\begin{equation}
\sup_{\left\Vert \alpha \right\Vert _{\infty }\leq 1}\left\vert
\sum_{j=1}^{n}\alpha _{j}\beta _{j}\right\vert \leq \left\Vert \beta
\right\Vert _{1}  \label{e.2.5}
\end{equation}%
where $\alpha =\left( \alpha _{1},\dots ,\alpha _{n}\right) $ and $\beta
=\left( \beta _{1},\dots ,\beta _{n}\right) .$

For $\left( \beta _{1},\dots ,\beta _{n}\right) \neq 0,$ consider $\alpha
=\left( \alpha _{1},\dots ,\alpha _{n}\right) $ with $\alpha _{j}:=\frac{%
\overline{\beta _{j}}}{\left\vert \beta _{j}\right\vert }$ for those $j$ for
which $\beta _{j}\neq 0$ and $\alpha _{j}=0,$ for the rest.

We have%
\begin{equation*}
\left\vert \sum_{j=1}^{n}\alpha _{j}\beta _{j}\right\vert =\left\vert
\sum_{j=1}^{n}\frac{\overline{\beta _{j}}}{\left\vert \beta _{j}\right\vert }%
\beta _{j}\right\vert =\sum_{j=1}^{n}\left\vert \beta _{j}\right\vert
=\left\Vert \beta \right\Vert _{1}
\end{equation*}%
and%
\begin{equation*}
\left\Vert \alpha \right\Vert _{\infty }=\max_{j\in \left\{ 1,...,n\right\}
}\left\vert \alpha _{j}\right\vert =\max_{j\in \left\{ 1,...,n\right\}
}\left\vert \frac{\overline{\beta _{j}}}{\left\vert \beta _{j}\right\vert }%
\right\vert =1
\end{equation*}%
and by (\ref{e.2.5}) we get the representation%
\begin{equation}
\sup_{\left\Vert \alpha \right\Vert _{\infty }\leq 1}\left\vert
\sum_{j=1}^{n}\alpha _{j}\beta _{j}\right\vert =\left\Vert \beta \right\Vert
_{1}  \label{e.2.6}
\end{equation}%
for any $\beta =\left( \beta _{1},\dots ,\beta _{n}\right) \in \mathbb{K}%
^{n} $.

By taking the supremum in the equality (\ref{e.2.4.2}) we have 
\begin{align*}
\sup_{\left\Vert \alpha \right\Vert _{\infty }\leq 1}\left\Vert
\sum_{j=1}^{n}\alpha _{j}x_{j}\right\Vert & =\sup_{\left\Vert \alpha
\right\Vert _{\infty }\leq 1}\left( \sup_{\left\Vert f\right\Vert \leq
1}\left\vert \sum_{j=1}^{n}\alpha _{j}f\left( x_{j}\right) \right\vert
\right) \\
& =\sup_{\left\Vert f\right\Vert \leq 1}\left( \sup_{\left\Vert \alpha
\right\Vert _{\infty }\leq 1}\left\vert \sum_{j=1}^{n}\alpha _{j}f\left(
x_{j}\right) \right\vert \right) =\sup_{\left\Vert f\right\Vert \leq
1}\left( \sum_{j=1}^{n}\left\vert f\left( x_{j}\right) \right\vert \right) ,
\end{align*}%
where for the last equality we used the equality (\ref{e.2.6}), which proves
the representation (\ref{e.2.2}).

Finally, we have 
\begin{equation*}
\left\vert \sum_{j=1}^{n}\alpha _{j}\beta _{j}\right\vert \leq
\sum_{j=1}^{n}\left\vert \alpha _{j}\right\vert \max_{j\in \left\{
1,...,n\right\} }\left\vert \beta _{j}\right\vert
\end{equation*}%
which implies that%
\begin{equation}
\sup_{\left\Vert \alpha \right\Vert _{1}\leq 1}\left\vert
\sum_{j=1}^{n}\alpha _{j}\beta _{j}\right\vert \leq \left\Vert \beta
\right\Vert _{\infty }  \label{e.2.7}
\end{equation}%
where $\alpha =\left( \alpha _{1},\dots ,\alpha _{n}\right) $ and $\beta
=\left( \beta _{1},\dots ,\beta _{n}\right) .$

For $\left( \beta _{1},\dots ,\beta _{n}\right) \neq 0$, let $j_{0}\in
\left\{ 1,...,n\right\} $ such that $\left\Vert \beta \right\Vert _{\infty
}=\max_{j\in \left\{ 1,...,n\right\} }\left\vert \beta _{j}\right\vert
=\left\vert \beta _{j_{0}}\right\vert .$ Consider $\alpha =\left( \alpha
_{1},\dots ,\alpha _{n}\right) $ with $\alpha _{j_{0}}=\frac{\overline{\beta
_{j_{0}}}}{\left\vert \beta _{j_{0}}\right\vert }$ and $\alpha _{j}=0$ for $%
j\neq j_{0}.$ For this choice we get%
\begin{equation*}
\sum_{j=1}^{n}\left\vert \alpha _{j}\right\vert =\frac{\left\vert \overline{%
\beta _{j_{0}}}\right\vert }{\left\vert \beta _{j_{0}}\right\vert }=1\text{
and }\left\vert \sum_{j=1}^{n}\alpha _{j}\beta _{j}\right\vert =\left\vert 
\frac{\overline{\beta _{j_{0}}}}{\left\vert \beta _{j_{0}}\right\vert }\beta
_{j_{0}}\right\vert =\left\vert \beta _{j_{0}}\right\vert =\left\Vert \beta
\right\Vert _{\infty },
\end{equation*}%
therefore by (\ref{e.2.7}) we obtain the representation 
\begin{equation}
\sup_{\left\Vert \alpha \right\Vert _{1}\leq 1}\left\vert
\sum_{j=1}^{n}\alpha _{j}\beta _{j}\right\vert =\left\Vert \beta \right\Vert
_{\infty }  \label{e.2.8}
\end{equation}%
for any $\beta =\left( \beta _{1},\dots ,\beta _{n}\right) \in \mathbb{K}%
^{n} $.

By taking the supremum in the equality (\ref{e.2.4.2}) and by using the
equality (\ref{e.2.8}), we have 
\begin{align*}
\sup_{\left\Vert \alpha \right\Vert _{1}\leq 1}\left\Vert
\sum_{j=1}^{n}\alpha _{j}x_{j}\right\Vert & =\sup_{\left\Vert \alpha
\right\Vert _{1}\leq 1}\left( \sup_{\left\Vert f\right\Vert \leq
1}\left\vert \sum_{j=1}^{n}\alpha _{j}f\left( x_{j}\right) \right\vert
\right) \\
& =\sup_{\left\Vert f\right\Vert \leq 1}\left( \sup_{\left\Vert \alpha
\right\Vert _{1}\leq 1}\left\vert \sum_{j=1}^{n}\alpha _{j}f\left(
x_{j}\right) \right\vert \right) =\sup_{\left\Vert f\right\Vert \leq
1}\left( \max_{j\in \left\{ 1,...,n\right\} }\left\vert f\left( x_{j}\right)
\right\vert \right) \\
& =\max_{j\in \left\{ 1,...,n\right\} }\left( \sup_{\left\Vert f\right\Vert
\leq 1}\left\vert f\left( x_{j}\right) \right\vert \right) =\max_{j\in
\left\{ 1,...,n\right\} }\left\{ \left\Vert x_{j}\right\Vert \right\} ,
\end{align*}%
which proves (\ref{e.2.2.0}). For the last equality we used the property (%
\ref{e.2.4.1}).
\end{proof}

\begin{corollary}
\label{c.2.1}With the assumptions of Theorem \ref{t.2.1} we have for $q\geq
1 $ that%
\begin{equation}
\frac{1}{n^{1/q}}\left\Vert \mathbf{x}\right\Vert _{n,q}\leq \left\Vert 
\mathbf{x}\right\Vert _{h,n,q}\leq \left\Vert \mathbf{x}\right\Vert _{n,q}
\label{e.2.9}
\end{equation}%
for any any $\mathbf{x}\in E^{n}.$

In particular, we have 
\begin{equation}
\frac{1}{\sqrt{n}}\left\Vert \mathbf{x}\right\Vert _{2}\leq \left\Vert 
\mathbf{x}\right\Vert _{h,e}\leq \left\Vert \mathbf{x}\right\Vert _{2}
\label{e.2.10}
\end{equation}%
for any any $\mathbf{x}\in E^{n}.$
\end{corollary}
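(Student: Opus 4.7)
The plan is to deduce both inequalities directly from the representation (\ref{e.2.1}) established in Theorem \ref{t.2.1}, supplemented by the Hahn--Banach identity (\ref{e.2.4.1}) and a trivial comparison between $\ell^{\infty}$ and $\ell^{q}$ norms on $\mathbb{R}^{n}$. The special case (\ref{e.2.10}) is then immediate by setting $q=2$.

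For the upper bound I would fix $f\in E^{\ast}$ with $\left\Vert f\right\Vert \leq 1$ and apply the defining inequality $\left\vert f\left(x_{j}\right)\right\vert \leq \left\Vert f\right\Vert \left\Vert x_{j}\right\Vert \leq \left\Vert x_{j}\right\Vert$ termwise, obtaining
\begin{equation*}
\left(\sum_{j=1}^{n}\left\vert f\left(x_{j}\right)\right\vert ^{q}\right)^{1/q}\leq \left(\sum_{j=1}^{n}\left\Vert x_{j}\right\Vert ^{q}\right)^{1/q}=\left\Vert \mathbf{x}\right\Vert _{n,q}.
\end{equation*}
Taking the supremum over $\left\Vert f\right\Vert \leq 1$ and using (\ref{e.2.1}) yields $\left\Vert \mathbf{x}\right\Vert _{h,n,q}\leq \left\Vert \mathbf{x}\right\Vert _{n,q}$. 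The cases $q=1$ and $q=\infty$ can be handled identically via (\ref{e.2.2}) and (\ref{e.2.2.0}).

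For the lower bound, observe that for any fixed index $j$ and any $f$ with $\left\Vert f\right\Vert \leq 1$, a single term dominates the sum: $\left(\sum_{k=1}^{n}\left\vert f\left(x_{k}\right)\right\vert ^{q}\right)^{1/q}\geq \left\vert f\left(x_{j}\right)\right\vert$. Taking the supremum over $f$ on both sides and invoking (\ref{e.2.4.1}) gives $\left\Vert \mathbf{x}\right\Vert _{h,n,q}\geq \left\Vert x_{j}\right\Vert$ for every $j$, hence $\left\Vert \mathbf{x}\right\Vert _{h,n,q}\geq \max_{j}\left\Vert x_{j}\right\Vert$. Finally, from $n\max_{j}\left\Vert x_{j}\right\Vert ^{q}\geq \sum_{j=1}^{n}\left\Vert x_{j}\right\Vert ^{q}$ we obtain $\max_{j}\left\Vert x_{j}\right\Vert \geq n^{-1/q}\left\Vert \mathbf{x}\right\Vert _{n,q}$, which chains into the required bound.

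There is no genuine obstacle here: everything reduces to a termwise estimate inside the Hahn--Banach representation, together with the elementary fact that an $\ell^{q}$ norm on $\mathbb{R}^{n}$ is bounded above by $n^{1/q}$ times the $\ell^{\infty}$ norm. The only mild subtlety is to notice that the lower bound is proved by \emph{dropping} all but one term in the sum inside (\ref{e.2.1}), rather than trying to exhibit a single functional $f$ that simultaneously nearly attains the norm of every $x_{j}$; this latter, stronger statement is generally false and is precisely the reason the factor $n^{-1/q}$ cannot be improved (as one sees by taking $\mathbf{x}=\left(e_{1},\dots ,e_{n}\right)$ in a Hilbert space with orthonormal $e_{j}$).
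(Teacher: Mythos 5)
Your proof is correct and follows essentially the same route as the paper: the upper bound via the termwise estimate $\left\vert f\left(x_{j}\right)\right\vert \leq \left\Vert x_{j}\right\Vert$ inside the representation (\ref{e.2.1}), and the lower bound by showing $\left\Vert \mathbf{x}\right\Vert _{h,n,q}\geq \left\Vert \mathbf{x}\right\Vert _{n,\infty }\geq n^{-1/q}\left\Vert \mathbf{x}\right\Vert _{n,q}$. The paper phrases the middle step as $\sup_{\left\Vert f\right\Vert \leq 1}\max_{j}\left\vert f\left(x_{j}\right)\right\vert =\max_{j}\left\Vert x_{j}\right\Vert$ rather than fixing one index at a time, but this is the same argument.
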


\begin{proof}
Let $\mathbf{x}\in E^{n}$ with $\mathbf{x}=\left( x_{1},\dots ,x_{n}\right) $
and $f\in E^{\ast }$ with $\left\Vert f\right\Vert \leq 1,$ then for $q\geq 1
$%
\begin{equation*}
\left( \sum_{j=1}^{n}\left\vert f\left( x_{j}\right) \right\vert ^{q}\right)
^{1/q}\leq \left( \sum_{j=1}^{n}\left( \left\Vert f\right\Vert \left\Vert
x_{i}\right\Vert \right) ^{q}\right) ^{1/q}=\left\Vert f\right\Vert \left(
\sum_{j=1}^{n}\left\Vert x_{i}\right\Vert ^{q}\right) ^{1/q}=\left\Vert
f\right\Vert \left\Vert \mathbf{x}\right\Vert _{n,q}
\end{equation*}%
and by taking the supremum over $\left\Vert f\right\Vert \leq 1,$ we get the
second inequality in (\ref{e.2.9}).

By the properties of complex numbers, we have 
\begin{equation*}
\max_{j\in \left\{ 1,...,n\right\} }\left\{ \left\vert f\left( x_{j}\right)
\right\vert \right\} \leq \left( \sum_{j=1}^{n}\left\vert f\left(
x_{j}\right) \right\vert ^{q}\right) ^{1/q}
\end{equation*}%
and by taking the supremum over $\left\Vert f\right\Vert \leq 1,$ we get 
\begin{equation}
\sup_{\left\Vert f\right\Vert \leq 1}\left( \max_{j\in \left\{
1,...,n\right\} }\left\{ \left\vert f\left( x_{j}\right) \right\vert
\right\} \right) \leq \sup_{\left\Vert f\right\Vert \leq 1}\left(
\sum_{j=1}^{n}\left\vert f\left( x_{j}\right) \right\vert ^{q}\right) ^{1/q}
\label{e.2.11}
\end{equation}%
and since 
\begin{align*}
\sup_{\left\Vert f\right\Vert \leq 1}\left( \max_{j\in \left\{
1,...,n\right\} }\left\{ \left\vert f\left( x_{j}\right) \right\vert
\right\} \right) & =\max_{j\in \left\{ 1,...,n\right\} }\left\{
\sup_{\left\Vert f\right\Vert \leq 1}\left\vert f\left( x_{j}\right)
\right\vert \right\} \\
& =\max_{j\in \left\{ 1,...,n\right\} }\left\{ \left\Vert x_{j}\right\Vert
\right\} =\left\Vert \mathbf{x}\right\Vert _{n,\infty },
\end{align*}%
then by (\ref{e.2.11}) we get 
\begin{equation}
\left\Vert \mathbf{x}\right\Vert _{n,\infty }\leq \left\Vert \mathbf{x}%
\right\Vert _{h,n,q}\text{ for any }\mathbf{x}\in E^{n}.  \label{e.2.12}
\end{equation}%
Since%
\begin{equation*}
\left( \sum_{j=1}^{n}\left\Vert x_{j}\right\Vert ^{q}\right) ^{1/q}\leq
\left( n\left\Vert \mathbf{x}\right\Vert _{n,\infty }^{q}\right)
^{1/q}=n^{1/q}\left\Vert \mathbf{x}\right\Vert _{n,\infty }
\end{equation*}%
then also 
\begin{equation}
\frac{1}{n^{1/q}}\left\Vert \mathbf{x}\right\Vert _{n,q}\leq \left\Vert 
\mathbf{x}\right\Vert _{n,\infty }\text{ for any }\mathbf{x}\in E^{n}.
\label{e.2.13}
\end{equation}%
By utilising the inequalities (\ref{e.2.12}) and (\ref{e.2.13}) we obtain
the first inequality in (\ref{e.2.9}).
\end{proof}

\begin{remark}
\label{r.2.1}In the case of inner product spaces the inequality (\ref{e.2.10}%
) has been obtained in a different and more difficult way \cite{SSD1} by
employing the rotation-invariant normalised positive Borel measure on the
unit sphere.
\end{remark}

\begin{corollary}
\label{c.2.2}With the assumptions of Theorem \ref{t.2.1} we have for $r\geq
q\geq 1$ that%
\begin{equation}
\left\Vert \mathbf{x}\right\Vert _{h,n,r}\leq \left\Vert \mathbf{x}%
\right\Vert _{h,n,q}\leq n^{\frac{r-q}{rq}}\left\Vert \mathbf{x}\right\Vert
_{h,n,r}  \label{e.2.14}
\end{equation}%
for any any $\mathbf{x}\in E^{n}.$

In particular, for $q\geq 2$ we have%
\begin{equation}
\left\Vert \mathbf{x}\right\Vert _{h,n,q}\leq \left\Vert \mathbf{x}%
\right\Vert _{h,e}\leq n^{\frac{q-2}{2q}}\left\Vert \mathbf{x}\right\Vert
_{h,n,q}  \label{e.2.15}
\end{equation}%
and for $1\leq q\leq 2$ we have%
\begin{equation}
\left\Vert \mathbf{x}\right\Vert _{h,e}\leq \left\Vert \mathbf{x}\right\Vert
_{h,n,q}\leq n^{\frac{2-q}{2q}}\left\Vert \mathbf{x}\right\Vert _{h,e}
\label{e.2.16}
\end{equation}%
for any any $\mathbf{x}\in E^{n}.$
\end{corollary}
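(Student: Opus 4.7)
The plan is to reduce the inequality to a scalar fact about $\ell^{p}$-norms on $\mathbb{K}^{n}$ and then pass to the supremum via the representation in Theorem~\ref{t.2.1}. Concretely, for any $f\in E^{\ast}$ with $\|f\|\le 1$, set $a_{j}:=|f(x_{j})|\ge 0$. The classical nesting of $\ell^{p}$-norms on $\mathbb{R}^{n}$ gives, for $r\ge q\ge 1$,
\begin{equation*}
\Bigl(\sum_{j=1}^{n}a_{j}^{r}\Bigr)^{1/r}\ \le\ \Bigl(\sum_{j=1}^{n}a_{j}^{q}\Bigr)^{1/q}\ \le\ n^{\frac{r-q}{rq}}\,\Bigl(\sum_{j=1}^{n}a_{j}^{r}\Bigr)^{1/r},
\end{equation*}
where the left inequality is the monotone decrease of $p\mapsto\|a\|_{p}$, and the right inequality is an application of H\"{o}lder's inequality with conjugate exponents $r/q$ and $r/(r-q)$ to the product $a_{j}^{q}\cdot 1$, yielding the constant $n^{(r-q)/(rq)}=n^{1/q-1/r}$.

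Next, taking the supremum over $\|f\|\le 1$ in each term and invoking the representation (\ref{e.2.1}) from Theorem~\ref{t.2.1} (applied for both exponents $q$ and $r$), the left inequality gives $\|\mathbf{x}\|_{h,n,r}\le\|\mathbf{x}\|_{h,n,q}$, while the right inequality gives $\|\mathbf{x}\|_{h,n,q}\le n^{(r-q)/(rq)}\|\mathbf{x}\|_{h,n,r}$, which is exactly (\ref{e.2.14}). A small point of care is that on the right one must take the supremum on the \emph{inner} side first, since the factor $n^{(r-q)/(rq)}$ is uniform in $f$; this is legitimate because the inequality holds pointwise in $f$.

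The two particular cases then follow by choosing the exponents appropriately. For (\ref{e.2.15}) with $q\ge 2$, apply (\ref{e.2.14}) with the pair $(r,q)\leftarrow(q,2)$ and use $\|\mathbf{x}\|_{h,n,2}=\|\mathbf{x}\|_{h,e}$ (see (\ref{e.2.2.a})); for (\ref{e.2.16}) with $1\le q\le 2$, apply (\ref{e.2.14}) with $(r,q)\leftarrow(2,q)$ and again identify $\|\mathbf{x}\|_{h,n,2}$ with $\|\mathbf{x}\|_{h,e}$.

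There is no genuine obstacle here: the result is essentially a lifting of the well-known inclusion $\ell^{q}\hookrightarrow\ell^{r}$ on $\mathbb{K}^{n}$ (with sharp constants) from scalar sequences to the hypo-$q$-norms. The only thing that needs a mild justification is the interchange of suprema, which is trivial because one of them is a scalar factor independent of $f$.
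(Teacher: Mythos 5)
Your argument is correct and is essentially the paper's own proof: both apply the elementary nesting inequalities $\left( \sum_{j=1}^{n}a_{j}^{r}\right)^{1/r}\leq \left( \sum_{j=1}^{n}a_{j}^{q}\right)^{1/q}\leq n^{\frac{r-q}{rq}}\left( \sum_{j=1}^{n}a_{j}^{r}\right)^{1/r}$ to $a_{j}=\left\vert f\left( x_{j}\right)\right\vert$ and then take the supremum over $\left\Vert f\right\Vert \leq 1$ using the representation of Theorem \ref{t.2.1}, with the special cases obtained by the same substitutions. The only difference is that you sketch a proof of the scalar inequality (monotonicity plus H\"{o}lder) where the paper simply cites it.
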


\begin{proof}
We use the following elementary inequalities for the nonnegative numbers $%
a_{j}$, $j=1,...,n$ and $r\geq q>0$ (see for instance \cite{SMS}) 
\begin{equation}
\left( \sum_{j=1}^{n}a_{j}^{r}\right) ^{1/r}\leq \left(
\sum_{j=1}^{n}a_{j}^{q}\right) ^{1/q}\leq n^{\frac{r-q}{rq}}\left(
\sum_{j=1}^{n}a_{j}^{r}\right) ^{1/r}.  \label{e.2.17}
\end{equation}%
Let $\mathbf{x}\in E^{n}$ with $\mathbf{x}=\left( x_{1},\dots ,x_{n}\right) $
and $f\in E^{\ast }$ with $\left\Vert f\right\Vert \leq 1,$ then for $r\geq
q\geq 1$ we have 
\begin{equation}
\left( \sum_{j=1}^{n}\left\vert f\left( x_{j}\right) \right\vert ^{r}\right)
^{1/r}\leq \left( \sum_{j=1}^{n}\left\vert f\left( x_{j}\right) \right\vert
^{q}\right) ^{1/q}\leq n^{\frac{r-q}{rq}}\left( \sum_{j=1}^{n}\left\vert
f\left( x_{j}\right) \right\vert ^{r}\right) ^{1/r}.  \label{e.2.18}
\end{equation}%
By taking the supremum over $f\in E^{\ast }$ with $\left\Vert f\right\Vert
\leq 1$ and using Theorem \ref{t.2.1}, we get (\ref{e.2.14}).
\end{proof}

\begin{remark}
\label{r.2.2}If we take $q=1$ in (\ref{e.2.14}), then we get 
\begin{equation}
\left\Vert \mathbf{x}\right\Vert _{h,n,r}\leq \left\Vert \mathbf{x}%
\right\Vert _{h,n,1}\leq n^{\frac{r-1}{r}}\left\Vert \mathbf{x}\right\Vert
_{h,n,r}  \label{e.2.19}
\end{equation}%
for any any $\mathbf{x}\in E^{n}.$

In particular, for $r=2$ we get%
\begin{equation}
\left\Vert \mathbf{x}\right\Vert _{h,e}\leq \left\Vert \mathbf{x}\right\Vert
_{h,n,1}\leq \sqrt{n}\left\Vert \mathbf{x}\right\Vert _{h,e}  \label{e.2.20}
\end{equation}%
for any any $\mathbf{x}\in E^{n}.$
\end{remark}

\section{Some Reverse Inequalities}

Recall the following reverse of Cauchy-Buniakowski-Schwarz inequality \cite%
{SSDRCBS} (see also \cite[Theorem 5. 14]{SSDSurvey})

\begin{lemma}
\label{l.3.1}Let $a,$ $A\in \mathbb{R}$ and $\mathbf{z}=\left( z_{1},\dots
,z_{n}\right) ,$ $\mathbf{y}=\left( y_{1},\dots ,y_{n}\right) $ be two
sequences of real numbers with the property that: 
\begin{equation}
ay_{j}\leq z_{j}\leq Ay_{j}\text{ \hspace{0.05in}for each \hspace{0.05in}}%
j\in \left\{ 1,\dots ,n\right\} .  \label{e.3.1}
\end{equation}%
Then for any $\mathbf{w}=\left( w_{1},\dots ,w_{n}\right) $ a sequence of
positive real numbers, one has the inequality 
\begin{equation}
0\leq \sum_{j=1}^{n}w_{j}z_{j}^{2}\sum_{j=1}^{n}w_{j}y_{j}^{2}-\left(
\sum_{j=1}^{n}w_{j}z_{j}y_{j}\right) ^{2}\leq \frac{1}{4}\left( A-a\right)
^{2}\left( \sum_{j=1}^{n}w_{j}y_{j}^{2}\right) ^{2}.  \label{e.3.2}
\end{equation}%
The constant $\frac{1}{4}$ is sharp in (\ref{e.3.2}).
\end{lemma}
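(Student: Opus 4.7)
The left inequality in (\ref{e.3.2}) is just the weighted Cauchy--Bu\-nia\-kow\-ski--Schwarz inequality applied to the sequences $\sqrt{w_{j}}z_{j}$ and $\sqrt{w_{j}}y_{j}$, so I would dispose of it in one line. For the reverse bound I plan to use the classical quadratic trick that turns the two-sided condition (\ref{e.3.1}) into a single useful inequality: since $ay_{j}\leq z_{j}\leq Ay_{j}$, the product $(Ay_{j}-z_{j})(z_{j}-ay_{j})$ is nonnegative, which expands to
\begin{equation*}
z_{j}^{2}+aAy_{j}^{2}\leq (A+a)z_{j}y_{j}.
\end{equation*}

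Multiplying by $w_{j}>0$ and summing, and introducing the shorthand $S_{0}:=\sum_{j}w_{j}y_{j}^{2}$, $S_{1}:=\sum_{j}w_{j}z_{j}y_{j}$, $S_{2}:=\sum_{j}w_{j}z_{j}^{2}$, I would obtain $S_{2}\leq (A+a)S_{1}-aAS_{0}$. Substituting this into the Gr\"{u}ss-type difference gives
\begin{equation*}
S_{0}S_{2}-S_{1}^{2}\leq S_{0}\bigl[(A+a)S_{1}-aAS_{0}\bigr]-S_{1}^{2}=(S_{1}-aS_{0})(AS_{0}-S_{1}).
\end{equation*}
The two factors on the right are nonnegative (assuming $y_{j}\geq 0$, which is forced by (\ref{e.3.1}) whenever $a<A$; the degenerate case $a=A$ makes both sides of (\ref{e.3.2}) equal to zero), because $S_{1}-aS_{0}=\sum_{j}w_{j}(z_{j}-ay_{j})y_{j}$ and $AS_{0}-S_{1}=\sum_{j}w_{j}(Ay_{j}-z_{j})y_{j}$. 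Applying the elementary AM--GM estimate $uv\leq \tfrac{1}{4}(u+v)^{2}$ to these two factors and noting that their sum is exactly $(A-a)S_{0}$ delivers the desired bound $\tfrac{1}{4}(A-a)^{2}S_{0}^{2}$.

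For the sharpness of the constant $\tfrac{1}{4}$, I would exhibit a simple equality case: take $n=2$, $w_{1}=w_{2}=1$, $y_{1}=y_{2}=1$, $z_{1}=A$, $z_{2}=a$, so that $S_{0}=2$, $S_{1}=A+a$, $S_{2}=A^{2}+a^{2}$, and both sides of (\ref{e.3.2}) equal $(A-a)^{2}$. In this example the two AM--GM factors $S_{1}-aS_{0}$ and $AS_{0}-S_{1}$ coincide, which is precisely the equality condition in the final step. I do not foresee a genuine obstacle in this argument; the only mildly delicate point is the sign handling of $y_{j}$, which I would address by observing that when $a<A$ the hypothesis (\ref{e.3.1}) forces $y_{j}\geq 0$, making every factor appearing in the chain nonnegative and legitimising the AM--GM step.
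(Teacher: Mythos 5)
Your argument is correct, and it is worth noting that the paper itself offers no proof of Lemma \ref{l.3.1}: it is recalled from the cited work of Dragomir on counterparts of the Schwarz inequality, so your write-up is judged on its own merits. The chain is sound: the left inequality is the weighted Cauchy--Buniakowski--Schwarz inequality for $\sqrt{w_{j}}z_{j}$, $\sqrt{w_{j}}y_{j}$; the condition (\ref{e.3.1}) gives $(Ay_{j}-z_{j})(z_{j}-ay_{j})\geq 0$, hence $S_{2}\leq (A+a)S_{1}-aAS_{0}$, and the factorization $S_{0}S_{2}-S_{1}^{2}\leq (S_{1}-aS_{0})(AS_{0}-S_{1})$ together with $uv\leq \tfrac{1}{4}(u+v)^{2}$ and $(S_{1}-aS_{0})+(AS_{0}-S_{1})=(A-a)S_{0}$ yields the bound, and your example $n=2$, $w_{j}=y_{j}=1$, $z_{1}=A$, $z_{2}=a$ gives equality, settling sharpness. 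This is essentially the standard Gr\"{u}ss-type quadratic trick used in the cited source. One small simplification: the discussion of the sign of $y_{j}$ and the nonnegativity of the two factors is superfluous, since $uv\leq \tfrac{1}{4}(u+v)^{2}$ is the identity $\tfrac{1}{4}(u-v)^{2}\geq 0$ and holds for all real $u$, $v$; you may delete that paragraph without loss, keeping only the remark that the case $a=A$ is trivial (or not even that, since the argument covers it).
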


O. Shisha and B. Mond obtained in 1967 (see \cite{SM}) the following
counterparts of $\left( CBS\right) $- inequality (see also \cite[Theorem
5.20 \& 5.21]{SSDSurvey})

\begin{lemma}
\label{l.3.2}Assume that $\mathbf{a}=\left( a_{1},\dots ,a_{n}\right) $ and $%
\mathbf{b}=\left( b_{1},\dots ,b_{n}\right) $ are such that there exists $a,$
$A,$ $b,$ $B$ with the property that: 
\begin{equation}
0\leq a\leq a_{j}\leq A\text{ \hspace{0.05in}and \hspace{0.05in}}0<b\leq
b_{j}\leq B\text{ \hspace{0.05in}for any }j\in \left\{ 1,\dots ,n\right\}
\label{e.3.3}
\end{equation}%
then we have the inequality 
\begin{equation}
\sum_{j=1}^{n}a_{j}^{2}\sum_{j=1}^{n}b_{j}^{2}-\left(
\sum_{j=1}^{n}a_{j}b_{j}\right) ^{2}\leq \left( \sqrt{\frac{A}{b}}-\sqrt{%
\frac{a}{B}}\right) ^{2}\sum_{j=1}^{n}a_{j}b_{j}\sum_{j=1}^{n}b_{j}^{2}.
\label{e.3.4}
\end{equation}
\end{lemma}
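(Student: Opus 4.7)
The plan is to reduce the estimate (\ref{e.3.4}) to a completing-the-square identity applied to the ratio bounds that (\ref{e.3.3}) places on $a_j/b_j$. Setting $\alpha := a/B$ and $\beta := A/b$, the hypotheses $0\leq a\leq a_j\leq A$ and $0<b\leq b_j\leq B$ immediately give $\alpha \leq a_j/b_j \leq \beta$ for every $j\in\{1,\dots,n\}$, or equivalently $(a_j - \alpha b_j)(\beta b_j - a_j) \geq 0$. Expanding and summing over $j$, I would obtain
\begin{equation*}
\sum_{j=1}^{n} a_j^2 + \alpha\beta \sum_{j=1}^{n} b_j^2 \leq (\alpha+\beta)\sum_{j=1}^{n} a_j b_j.
\end{equation*}

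Next I would multiply both sides by $\sum_{j=1}^{n} b_j^2 > 0$, introduce the shorthand $X := \sum_{j=1}^{n} a_j b_j$ and $Y := \sum_{j=1}^{n} b_j^2$, and rearrange to isolate the quantity of interest:
\begin{equation*}
\sum_{j=1}^{n} a_j^2 \sum_{j=1}^{n} b_j^2 - X^2 \leq (\alpha+\beta) X Y - \alpha\beta Y^2 - X^2.
\end{equation*}
The crux of the proof is then the algebraic identity
\begin{equation*}
(\alpha+\beta) X Y - \alpha\beta Y^2 - X^2 = (\sqrt{\beta} - \sqrt{\alpha})^2 XY - \bigl(X - \sqrt{\alpha\beta}\, Y\bigr)^2,
\end{equation*}
which is immediate upon expansion. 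Since the squared term is nonnegative, discarding it yields (\ref{e.3.4}) once one observes $(\sqrt{\beta}-\sqrt{\alpha})^2 = \bigl(\sqrt{A/b} - \sqrt{a/B}\bigr)^2$.

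The main obstacle, modest in this case, is recognising the completing-the-square identity above; it is precisely what produces the sharp factor $(\sqrt{A/b}-\sqrt{a/B})^2$ rather than the weaker $\frac{1}{4}(A/b - a/B)^2$ that would follow from applying Lemma \ref{l.3.1} directly with $w_j=1$, $z_j=a_j$, $y_j=b_j$. The positivity assumption $b>0$ is used exactly to ensure that the ratios $a_j/b_j$ and the square roots $\sqrt{\alpha\beta}$, $\sqrt{A/b}$, $\sqrt{a/B}$ are all well-defined, and equality in (\ref{e.3.4}) can be read off from the two conditions $X = \sqrt{\alpha\beta}\,Y$ and $a_j/b_j \in \{\alpha,\beta\}$ for each $j$.
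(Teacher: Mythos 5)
Your argument is correct, and it is worth noting that the paper itself offers no proof of this lemma at all: it is simply recalled from Shisha and Mond \cite{SM} (see also \cite{SSDSurvey}), so your write-up supplies a self-contained proof where the paper gives only a citation. The chain of steps checks out: the hypotheses (\ref{e.3.3}) do give $\alpha=a/B\leq a_{j}/b_{j}\leq A/b=\beta$, hence $\left( a_{j}-\alpha b_{j}\right) \left( \beta b_{j}-a_{j}\right) \geq 0$, and summing yields $\sum_{j=1}^{n}a_{j}^{2}+\alpha \beta \sum_{j=1}^{n}b_{j}^{2}\leq \left( \alpha +\beta \right) \sum_{j=1}^{n}a_{j}b_{j}$; multiplying by $Y=\sum_{j=1}^{n}b_{j}^{2}>0$ and using the identity $\left( \alpha +\beta \right) XY-\alpha \beta Y^{2}-X^{2}=\left( \sqrt{\beta }-\sqrt{\alpha }\right) ^{2}XY-\left( X-\sqrt{\alpha \beta }\,Y\right) ^{2}$ (which I verified by expansion) gives exactly (\ref{e.3.4}) after discarding the nonnegative square. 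This is the classical Diaz--Metcalf/Cassels-type route and is presumably close in spirit to the original source. One small caveat on your closing aside: applying Lemma \ref{l.3.1} with $w_{j}=1$, $z_{j}=a_{j}$, $y_{j}=b_{j}$ produces a bound with the factor $\left( \sum_{j=1}^{n}b_{j}^{2}\right) ^{2}$ rather than $\sum_{j=1}^{n}a_{j}b_{j}\sum_{j=1}^{n}b_{j}^{2}$, so the two estimates are not directly comparable and calling that one ``weaker'' is loose; this does not affect the validity of your proof.
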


and

\begin{lemma}
\label{l.3.3}Assume that $\mathbf{a}$, $\mathbf{b}$ are nonnegative
sequences and there exists $\gamma ,$ $\Gamma $ with the property that 
\begin{equation}
0\leq \gamma \leq \frac{a_{j}}{b_{j}}\leq \Gamma <\infty \text{ \hspace{%
0.05in}for any \hspace{0.05in}}j\in \left\{ 1,\dots ,n\right\} .
\label{e.3.5}
\end{equation}%
Then we have the inequality 
\begin{equation}
0\leq \left( \sum_{j=1}^{n}a_{j}^{2}\sum_{j=1}^{n}b_{j}^{2}\right) ^{\frac{1%
}{2}}-\sum_{j=1}^{n}a_{j}b_{j}\leq \frac{\left( \Gamma -\gamma \right) ^{2}}{%
4\left( \gamma +\Gamma \right) }\sum_{j=1}^{n}b_{j}^{2}.  \label{e.3.6}
\end{equation}
\end{lemma}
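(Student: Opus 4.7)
The plan is to prove the two halves of (\ref{e.3.6}) separately. The left-hand inequality $0 \leq (\sum_{j=1}^n a_j^2 \sum_{j=1}^n b_j^2)^{1/2} - \sum_{j=1}^n a_j b_j$ is nothing but the Cauchy--Buniakowski--Schwarz inequality applied to the nonnegative sequences $\mathbf{a}$ and $\mathbf{b}$, so no work is required there and the whole content lies in the reverse bound.

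For the reverse bound, my first step is to convert the two-sided ratio estimate (\ref{e.3.5}) into a single scalar inequality on the sums. Since $a_j - \gamma b_j \geq 0$ and $\Gamma b_j - a_j \geq 0$, the product $(a_j - \gamma b_j)(\Gamma b_j - a_j)$ is nonnegative for each $j$. Expanding and summing in $j$ yields
\begin{equation*}
\sum_{j=1}^{n} a_j^{2} + \gamma \Gamma \sum_{j=1}^{n} b_j^{2} \;\leq\; (\gamma + \Gamma)\sum_{j=1}^{n} a_j b_j .
\end{equation*}
Abbreviating $U := \sum_{j=1}^n a_j^2$, $S := \sum_{j=1}^n a_j b_j$, and $T := \sum_{j=1}^n b_j^2$, this reads $U \leq (\gamma + \Gamma)S - \gamma \Gamma T$.

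Next I would combine this with the elementary AM--GM bound $\sqrt{UT} \leq \frac{1}{2}\bigl(U/k + kT\bigr)$, which is valid for every $k>0$. Substituting the estimate for $U$ and collecting terms in $S$ and $T$ gives
\begin{equation*}
\sqrt{UT} - S \;\leq\; S\!\left(\frac{\gamma + \Gamma}{2k} - 1\right) + T\!\left(\frac{k}{2} - \frac{\gamma \Gamma}{2k}\right).
\end{equation*}
The choice $k := (\gamma + \Gamma)/2$ is engineered to annihilate the coefficient of $S$ and to simplify the coefficient of $T$ into
\begin{equation*}
\frac{\gamma + \Gamma}{4} - \frac{\gamma \Gamma}{\gamma + \Gamma} \;=\; \frac{(\Gamma - \gamma)^{2}}{4(\gamma + \Gamma)},
\end{equation*}
which is precisely the constant appearing in (\ref{e.3.6}).

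The only point requiring care is the degenerate case $\gamma + \Gamma = 0$, which forces $\gamma = \Gamma = 0$ and hence $a_j = 0$ for every $j$, so both sides of (\ref{e.3.6}) are zero and the statement holds trivially. Otherwise $k = (\gamma+\Gamma)/2 > 0$ and the AM--GM step above is legitimate; I do not foresee any substantial obstacle, as the delicate moment is really just spotting the optimising value of $k$ that balances the $S$- and $T$-contributions.
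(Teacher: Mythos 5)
Your proof is correct. Note that the paper does not prove this lemma at all: it is quoted as a known result of Shisha and Mond \cite{SM} (see also \cite[Theorems 5.20--5.21]{SSDSurvey}), so there is no internal argument to compare against; your derivation --- the Cassels-type pointwise estimate $\left( a_{j}-\gamma b_{j}\right) \left( \Gamma b_{j}-a_{j}\right) \geq 0$ summed to give $U+\gamma \Gamma T\leq \left( \gamma +\Gamma \right) S$, followed by the weighted AM--GM bound $\sqrt{UT}\leq \frac{1}{2}\left( U/k+kT\right) $ with the optimizing choice $k=\left( \gamma +\Gamma \right) /2$ --- is the standard route to this additive reverse of the Cauchy--Buniakowski--Schwarz inequality and is carried out without error. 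The only cosmetic remark is that in the degenerate case $\gamma =\Gamma =0$ the constant $\left( \Gamma -\gamma \right) ^{2}/4\left( \gamma +\Gamma \right) $ is itself undefined rather than zero, so that case is best excluded by the implicit assumption $\gamma +\Gamma >0$ rather than argued as you do; this does not affect the substance of your proof.
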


We have the following result:

\begin{theorem}
\label{t.3.1}Let $\left( E,\left\Vert \cdot \right\Vert \right) $ be a
normed linear space over the real or complex number field $\mathbb{K}$ and $%
\mathbf{x}\in E^{n}$ with $\mathbf{x}=\left( x_{1},\dots ,x_{n}\right) .$
Then we have%
\begin{equation}
0\leq \left\Vert \mathbf{x}\right\Vert _{h,e}^{2}-\frac{1}{n}\left\Vert 
\mathbf{x}\right\Vert _{h,n,1}^{2}\leq \frac{1}{4}n\left\Vert \mathbf{x}%
\right\Vert _{n,\infty }^{2},  \label{e.3.7}
\end{equation}%
\begin{equation}
0\leq \left\Vert \mathbf{x}\right\Vert _{h,e}^{2}-\frac{1}{n}\left\Vert 
\mathbf{x}\right\Vert _{h,n,1}^{2}\leq \left\Vert \mathbf{x}\right\Vert
_{h,n,1}\left\Vert \mathbf{x}\right\Vert _{n,\infty }  \label{e.3.8}
\end{equation}%
and%
\begin{equation}
0\leq \left\Vert \mathbf{x}\right\Vert _{h,e}-\frac{1}{\sqrt{n}}\left\Vert 
\mathbf{x}\right\Vert _{h,n,1}\leq \frac{1}{4}\sqrt{n}\left\Vert \mathbf{x}%
\right\Vert _{n,\infty }.  \label{e.3.9}
\end{equation}
\end{theorem}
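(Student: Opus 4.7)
The plan is to apply each of the three auxiliary lemmas (Lemma \ref{l.3.1}, Lemma \ref{l.3.2}, Lemma \ref{l.3.3}) to the scalar sequence $|f(x_1)|,\dots,|f(x_n)|$, where $f$ runs over the unit ball of $E^{\ast }$, and then invoke the representation formulas (\ref{e.2.1}) and (\ref{e.2.2}) from Theorem \ref{t.2.1} by passing to the supremum over $\Vert f\Vert \leq 1$. The key uniform bound is that $\left\vert f(x_{j})\right\vert \leq \Vert f\Vert \,\Vert x_{j}\Vert \leq \Vert \mathbf{x}\Vert _{n,\infty }$ for every $j$, so the scalar sequence always lies in $[0,\Vert \mathbf{x}\Vert _{n,\infty }]$. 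The nonnegativity (the left inequality in each of (\ref{e.3.7})--(\ref{e.3.9})) is immediate from the second inequality in (\ref{e.2.20}), which gives $\tfrac{1}{\sqrt{n}}\Vert \mathbf{x}\Vert _{h,n,1}\leq \Vert \mathbf{x}\Vert _{h,e}$.

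For (\ref{e.3.7}), I would apply Lemma \ref{l.3.1} with $z_{j}:=\left\vert f(x_{j})\right\vert $, $y_{j}:=1$, $w_{j}:=1$, and $a:=0$, $A:=\Vert \mathbf{x}\Vert _{n,\infty }$. The lemma yields
\begin{equation*}
n\sum_{j=1}^{n}\left\vert f(x_{j})\right\vert ^{2}-\Bigl(\sum_{j=1}^{n}\left\vert f(x_{j})\right\vert \Bigr)^{2}\leq \tfrac{1}{4}n^{2}\Vert \mathbf{x}\Vert _{n,\infty }^{2}.
\end{equation*}
Dividing by $n$, taking the supremum over $\Vert f\Vert \leq 1$, and using the subadditivity of the sup together with (\ref{e.2.2.a}) and (\ref{e.2.2}) produces (\ref{e.3.7}).

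For (\ref{e.3.8}), I would apply Lemma \ref{l.3.2} with $a_{j}:=\left\vert f(x_{j})\right\vert $, $b_{j}:=1$, $a:=0$, $A:=\Vert \mathbf{x}\Vert _{n,\infty }$, $b=B:=1$; the coefficient $(\sqrt{A/b}-\sqrt{a/B})^{2}$ collapses to $\Vert \mathbf{x}\Vert _{n,\infty }$, yielding
\begin{equation*}
n\sum_{j=1}^{n}\left\vert f(x_{j})\right\vert ^{2}-\Bigl(\sum_{j=1}^{n}\left\vert f(x_{j})\right\vert \Bigr)^{2}\leq n\,\Vert \mathbf{x}\Vert _{n,\infty }\sum_{j=1}^{n}\left\vert f(x_{j})\right\vert .
\end{equation*}
Dividing by $n$ and passing to the supremum again (the supremum of a sum is bounded by the sum of suprema) gives (\ref{e.3.8}). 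For (\ref{e.3.9}), the same substitution $a_{j}=\left\vert f(x_{j})\right\vert $, $b_{j}=1$ in Lemma \ref{l.3.3} with $\gamma =0$, $\Gamma =\Vert \mathbf{x}\Vert _{n,\infty }$ makes $(\Gamma -\gamma )^{2}/(4(\gamma +\Gamma ))$ simplify to $\tfrac{1}{4}\Vert \mathbf{x}\Vert _{n,\infty }$, yielding
\begin{equation*}
\sqrt{n}\Bigl(\sum_{j=1}^{n}\left\vert f(x_{j})\right\vert ^{2}\Bigr)^{1/2}-\sum_{j=1}^{n}\left\vert f(x_{j})\right\vert \leq \tfrac{n}{4}\Vert \mathbf{x}\Vert _{n,\infty }.
\end{equation*}
Dividing by $\sqrt{n}$ and taking the supremum finishes (\ref{e.3.9}).

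I don't foresee a substantial obstacle: the whole argument is a reduction of vector-valued hypo-norm inequalities to the corresponding scalar weighted inequalities via the functional representation of Theorem \ref{t.2.1}. The only minor care point is the sup-of-sum step, where one must note that in each case the right-hand side splits as the sum of a term that becomes $\Vert \mathbf{x}\Vert _{h,n,1}$ (or its square, after rearrangement) and an $f$-independent constant depending only on $\Vert \mathbf{x}\Vert _{n,\infty }$; degenerate cases (all $f(x_{j})=0$, or $\Vert \mathbf{x}\Vert _{n,\infty }=0$) render the inequalities trivially $0\leq 0$ and require no separate treatment.
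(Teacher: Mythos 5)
Your proposal is correct and follows essentially the same route as the paper: the same substitutions $z_j=a_j=|f(x_j)|$, $y_j=b_j=w_j=1$, $a=\gamma=0$, $A=\Gamma=\Vert\mathbf{x}\Vert_{n,\infty}$ in Lemmas \ref{l.3.1}--\ref{l.3.3}, followed by rearranging and taking the supremum over $\Vert f\Vert\leq 1$ via Theorem \ref{t.2.1}. The only (harmless) difference is that you justify the left-hand nonnegativity via (\ref{e.2.20}) rather than reading it off the lemmas' own lower bounds.
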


\begin{proof}
Let $\mathbf{x}\in E^{n}$ with $\mathbf{x}=\left( x_{1},\dots ,x_{n}\right) $
and put $R=\max_{j\in \left\{ 1,...,n\right\} }\left\{ \left\Vert
x_{j}\right\Vert \right\} =\left\Vert \mathbf{x}\right\Vert _{n,\infty }.$
If $f\in E^{\ast }$ with $\left\Vert f\right\Vert \leq 1$ then $\left\vert
f\left( x_{j}\right) \right\vert \leq \left\Vert f\right\Vert \left\Vert
x_{j}\right\Vert \leq R$ for any $j\in \left\{ 1,...,n\right\} .$

If we write the inequality (\ref{e.3.2}) for $z_{j}=\left\vert f\left(
x_{j}\right) \right\vert ,$ $w_{j}=y_{j}=1,$ $A=R$ and $a=0,$ we get 
\begin{equation*}
0\leq n\sum_{j=1}^{n}\left\vert f\left( x_{j}\right) \right\vert ^{2}-\left(
\sum_{j=1}^{n}\left\vert f\left( x_{j}\right) \right\vert \right) ^{2}\leq 
\frac{1}{4}n^{2}R^{2}
\end{equation*}%
for any $f\in E^{\ast }$ with $\left\Vert f\right\Vert \leq 1.$

This implies that 
\begin{equation}
\sum_{j=1}^{n}\left\vert f\left( x_{j}\right) \right\vert ^{2}\leq \frac{1}{n%
}\left( \sum_{j=1}^{n}\left\vert f\left( x_{j}\right) \right\vert \right)
^{2}+\frac{1}{4}nR^{2}  \label{e.3.10}
\end{equation}%
for any $f\in E^{\ast }$ with $\left\Vert f\right\Vert \leq 1.$

By taking the supremum in (\ref{e.3.10}) over $f\in E^{\ast }$ with $%
\left\Vert f\right\Vert \leq 1$ we get (\ref{e.3.7}).

If we write the inequality (\ref{e.3.4}) for $a_{j}=\left\vert f\left(
x_{j}\right) \right\vert ,$ $b_{j}=1,$ $b=B=1,$ $a=0$ and $A=R,$ then we get 
\begin{equation*}
0\leq n\sum_{j=1}^{n}\left\vert f\left( x_{j}\right) \right\vert ^{2}-\left(
\sum_{j=1}^{n}\left\vert f\left( x_{j}\right) \right\vert \right) ^{2}\leq
nR\sum_{j=1}^{n}\left\vert f\left( x_{j}\right) \right\vert ,
\end{equation*}%
for any $f\in E^{\ast }$ with $\left\Vert f\right\Vert \leq 1.$

This implies that%
\begin{equation}
\sum_{j=1}^{n}\left\vert f\left( x_{j}\right) \right\vert ^{2}\leq \frac{1}{n%
}\left( \sum_{j=1}^{n}\left\vert f\left( x_{j}\right) \right\vert \right)
^{2}+R\sum_{j=1}^{n}\left\vert f\left( x_{j}\right) \right\vert ,
\label{e.3.11}
\end{equation}%
for any $f\in E^{\ast }$ with $\left\Vert f\right\Vert \leq 1.$

By taking the supremum in (\ref{e.3.11}) over $f\in E^{\ast }$ with $%
\left\Vert f\right\Vert \leq 1$ we get (\ref{e.3.8}).

Finally, if we write the inequality (\ref{e.3.6}) for $a_{j}=\left\vert
f\left( x_{j}\right) \right\vert ,$ $b_{j}=1,$ $b=B=1,$ $\gamma =0$ and $%
\Gamma =R$ we have%
\begin{equation*}
0\leq \left( n\sum_{j=1}^{n}\left\vert f\left( x_{j}\right) \right\vert
^{2}\right) ^{\frac{1}{2}}-\sum_{j=1}^{n}\left\vert f\left( x_{j}\right)
\right\vert \leq \frac{1}{4}nR,
\end{equation*}%
for any $f\in E^{\ast }$ with $\left\Vert f\right\Vert \leq 1.$

This implies that%
\begin{equation}
\left( \sum_{j=1}^{n}\left\vert f\left( x_{j}\right) \right\vert ^{2}\right)
^{\frac{1}{2}}\leq \frac{1}{\sqrt{n}}\sum_{j=1}^{n}\left\vert f\left(
x_{j}\right) \right\vert +\frac{1}{4}\sqrt{n}R,  \label{e.3.12}
\end{equation}%
for any $f\in E^{\ast }$ with $\left\Vert f\right\Vert \leq 1.$

By taking the supremum in (\ref{e.3.12}) over $f\in E^{\ast }$ with $%
\left\Vert f\right\Vert \leq 1$ we get (\ref{e.3.9}).
\end{proof}

Further, we recall the \textit{\v{C}eby\v{s}ev's inequality} for \textit{%
synchronous} $n$-tuples of vectors $\mathbf{a}=\left( a_{1},\dots
,a_{n}\right) $ and $\mathbf{b}=\left( b_{1},\dots ,b_{n}\right) ,$ namely
if $\left( a_{j}-a_{k}\right) \left( b_{j}-b_{k}\right) \geq 0$ for any $j,$ 
$k\in \left\{ 1,...,n\right\} ,$ then%
\begin{equation}
\frac{1}{n}\sum_{j=1}^{n}a_{j}b_{j}\geq \frac{1}{n}\sum_{j=1}^{n}a_{j}\frac{1%
}{n}\sum_{j=1}^{n}b_{j}.  \label{e.3.13}
\end{equation}%
In 1950, Biernacki et al. obtained the following discrete version of Gr\"{u}%
ss' inequality

\begin{lemma}
\label{l.3.4}Assume that $\mathbf{a}=\left( a_{1},\dots ,a_{n}\right) $ and $%
\mathbf{b}=\left( b_{1},\dots ,b_{n}\right) $ are such that there exists
real numbers $a,$ $A,$ $b,$ $B$ with the property that: 
\begin{equation}
a\leq a_{j}\leq A\text{ \hspace{0.05in}and \hspace{0.05in}}b\leq b_{j}\leq B%
\text{ \hspace{0.05in}for any }j\in \left\{ 1,\dots ,n\right\} .
\label{e.3.14}
\end{equation}%
Then%
\begin{align}
& \left\vert \frac{1}{n}\sum_{j=1}^{n}a_{j}b_{j}-\frac{1}{n}%
\sum_{j=1}^{n}a_{j}\frac{1}{n}\sum_{j=1}^{n}b_{j}\right\vert  \label{e.3.15}
\\
& \leq \frac{1}{n}\left\lceil \frac{n}{2}\right\rceil \left( 1-\frac{1}{n}%
\left\lceil \frac{n}{2}\right\rceil \right) \left( A-a\right) \left(
B-b\right)  \notag \\
& =\frac{1}{n^{2}}\left\lceil \frac{n^{2}}{4}\right\rceil \left( A-a\right)
\left( B-a\right) \leq \frac{1}{4}\left( A-a\right) \left( B-b\right) , 
\notag
\end{align}%
where $\left\lceil x\right\rceil $ gives the largest integer less than or
equal to $x.$
\end{lemma}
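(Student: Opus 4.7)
The plan is to prove (\ref{e.3.15}) by the classical two-step route: Korkine's identity together with Cauchy--Schwarz reduces the problem to bounding a discrete variance, and then an extremal-point analysis of that variance delivers the sharp constant $\lfloor n^{2}/4\rfloor / n^{2}$.

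First I would establish Korkine's symmetric identity for the covariance
\[
D:=\frac{1}{n}\sum_{j=1}^{n}a_{j}b_{j}-\bar{a}\,\bar{b},\qquad \bar{a}:=\frac{1}{n}\sum_{j=1}^{n}a_{j},\quad \bar{b}:=\frac{1}{n}\sum_{j=1}^{n}b_{j},
\]
namely
\[
D\;=\;\frac{1}{2n^{2}}\sum_{j,k=1}^{n}(a_{j}-a_{k})(b_{j}-b_{k}),
\]
which is immediate by expanding the right-hand side. Viewing the double sum as an inner product indexed by pairs $(j,k)$ and applying Cauchy--Schwarz then yields $|D|\leq \sqrt{V_{\mathbf{a}}V_{\mathbf{b}}}$, where $V_{\mathbf{a}}:=\frac{1}{n}\sum_{j=1}^{n}a_{j}^{2}-\bar{a}^{\,2}$ is the discrete variance of $\mathbf{a}$ and $V_{\mathbf{b}}$ is defined analogously; this uses the identity $\sum_{j,k}(a_{j}-a_{k})^{2}=2n^{2}V_{\mathbf{a}}$.

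The problem thus reduces to the sharp variance bound
\[
V_{\mathbf{a}}\;\leq\;\frac{\lfloor n^{2}/4\rfloor}{n^{2}}(A-a)^{2}
\]
and its $\mathbf{b}$-analogue. As a function of any single coordinate $a_{j}$ with the others held fixed, $V_{\mathbf{a}}$ has leading term $\frac{n-1}{n^{2}}a_{j}^{2}$, hence is strictly convex; consequently its maximum over the box $[a,A]^{n}$ is attained at a vertex, so one may assume every $a_{j}\in\{a,A\}$. If exactly $k$ of the coordinates equal $A$ and $n-k$ equal $a$, a short calculation collapses to $V_{\mathbf{a}}=\frac{k(n-k)}{n^{2}}(A-a)^{2}$, which is maximised at $k=\lfloor n/2\rfloor$, giving $k(n-k)=\lfloor n/2\rfloor\lceil n/2\rceil=\lfloor n^{2}/4\rfloor$. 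Multiplying the two bounds, taking square roots, and using $n-\lceil n/2\rceil=\lfloor n/2\rfloor$ to rewrite $\lfloor n^{2}/4\rfloor/n^{2}$ as $\frac{1}{n}\lceil n/2\rceil\left(1-\frac{1}{n}\lceil n/2\rceil\right)$ recovers the exact form appearing in (\ref{e.3.15}); the final estimate $\lfloor n^{2}/4\rfloor/n^{2}\leq \frac{1}{4}$ is elementary.

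I expect the main obstacle to be the sharp variance bound: one must justify the reduction to vertices of the box (via the coordinate-wise convexity check) and then identify the maximiser of $k(n-k)$ on $\{0,1,\dots,n\}$, paying a little attention to the parity of $n$ when translating between the $\lfloor\cdot\rfloor$ and $\lceil\cdot\rceil$ notations used in the statement. Once Korkine's identity is in place, the rest is routine algebra together with one application of Cauchy--Schwarz.
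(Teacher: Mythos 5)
Your proof is correct, but note that the paper itself offers no argument for this lemma: it is quoted as a classical result of Biernacki, Pidek and Ryll-Nardzewski (1950) with a citation only, so there is no ``paper proof'' to match. Your route --- Korkine's identity $\frac{1}{n}\sum a_jb_j-\bar a\bar b=\frac{1}{2n^2}\sum_{j,k}(a_j-a_k)(b_j-b_k)$, Cauchy--Schwarz to get $|D|\le\sqrt{V_{\mathbf a}V_{\mathbf b}}$, and the sharp discrete variance bound $V_{\mathbf a}\le \frac{1}{n^2}\left\lfloor \frac{n^2}{4}\right\rfloor (A-a)^2$ obtained by coordinatewise convexity (reduction to vertices of the box) and maximizing $k(n-k)$ --- is a standard and fully rigorous derivation of exactly this constant, and each step checks out: the variance at a vertex with $k$ coordinates equal to $A$ is indeed $\frac{k(n-k)}{n^2}(A-a)^2$, and $\max_k k(n-k)=\lfloor n/2\rfloor\lceil n/2\rceil=\lfloor n^2/4\rfloor$, which coincides with $\frac{1}{n}\lfloor n/2\rfloor\bigl(1-\frac{1}{n}\lfloor n/2\rfloor\bigr)$. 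Two cosmetic remarks: the paper's symbol $\lceil\,\cdot\,\rceil$ is explicitly defined there as the \emph{largest integer less than or equal to} its argument, i.e.\ the floor, so your $\lfloor\,\cdot\,\rfloor$ statements agree with \eqref{e.3.15} without any parity bookkeeping; and the factor $(B-a)$ in the middle member of \eqref{e.3.15} is a typo in the source (it should be $(B-b)$, which is what your argument delivers). Your self-contained proof is a useful addition precisely because the original reference is not easily accessible; it gains a complete elementary derivation, at the cost of not being needed for the logic of the paper, which only uses the stated bound.
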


The following result also holds:

\begin{theorem}
\label{t.3.2}Let $\left( E,\left\Vert \cdot \right\Vert \right) $ be a
normed linear space over the real or complex number field $\mathbb{K}$ and $%
\mathbf{x}\in E^{n}$ with $\mathbf{x}=\left( x_{1},\dots ,x_{n}\right) .$
Then for $q,$ $r\geq 1$ we have%
\begin{eqnarray}
\left\Vert \mathbf{x}\right\Vert _{h,n,q+r}^{q+r} &\leq &\frac{1}{n}%
\left\Vert \mathbf{x}\right\Vert _{h,n,q}^{q}\left\Vert \mathbf{x}%
\right\Vert _{h,n,r}^{r}+\frac{1}{n}\left\lceil \frac{n^{2}}{4}\right\rceil
\left\Vert \mathbf{x}\right\Vert _{n,\infty }^{q+r}  \label{e.3.16} \\
&\leq &\frac{1}{n}\left\Vert \mathbf{x}\right\Vert _{h,n,q}^{q}\left\Vert 
\mathbf{x}\right\Vert _{h,n,r}^{r}+\frac{1}{4}n\left\Vert \mathbf{x}%
\right\Vert _{n,\infty }^{q+r}.  \notag
\end{eqnarray}
\end{theorem}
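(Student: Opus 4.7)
The plan is to apply the Biernacki--Gr\"{u}ss type inequality of Lemma \ref{l.3.4} pointwise in $f$ to the nonnegative sequences $a_{j}:=\left\vert f(x_{j})\right\vert ^{q}$ and $b_{j}:=\left\vert f(x_{j})\right\vert ^{r}$, and then to take the supremum over $f\in E^{\ast }$ with $\left\Vert f\right\Vert \leq 1$, invoking the representation from Theorem \ref{t.2.1} on both sides.

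First I would fix $f\in E^{\ast }$ with $\left\Vert f\right\Vert \leq 1$ and set $R:=\left\Vert \mathbf{x}\right\Vert _{n,\infty }=\max_{j}\left\Vert x_{j}\right\Vert $. Since $\left\vert f(x_{j})\right\vert \leq \left\Vert f\right\Vert \left\Vert x_{j}\right\Vert \leq R$, we have $0\leq a_{j}\leq R^{q}$ and $0\leq b_{j}\leq R^{r}$ for every $j$. Lemma \ref{l.3.4} then applies with $a=0$, $A=R^{q}$, $b=0$, $B=R^{r}$, and, noting that $a_{j}b_{j}=\left\vert f(x_{j})\right\vert ^{q+r}$, one side of the absolute value in (\ref{e.3.15}), after multiplying through by $n$, gives the pointwise-in-$f$ estimate
\begin{equation*}
\sum_{j=1}^{n}\left\vert f(x_{j})\right\vert ^{q+r}\leq \frac{1}{n}\left( \sum_{j=1}^{n}\left\vert f(x_{j})\right\vert ^{q}\right) \left( \sum_{j=1}^{n}\left\vert f(x_{j})\right\vert ^{r}\right) +\frac{1}{n}\left\lceil \frac{n^{2}}{4}\right\rceil R^{q+r}.
\end{equation*}

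I would then take $\sup_{\left\Vert f\right\Vert \leq 1}$ on both sides. By Theorem \ref{t.2.1} (since $q+r\geq 2>1$), the left-hand side becomes $\left\Vert \mathbf{x}\right\Vert _{h,n,q+r}^{q+r}$. For the first term on the right, nonnegativity of both factors permits the standard bound ``supremum of a product is at most the product of suprema'', and each supremum is then exactly $\left\Vert \mathbf{x}\right\Vert _{h,n,q}^{q}$, respectively $\left\Vert \mathbf{x}\right\Vert _{h,n,r}^{r}$, again by Theorem \ref{t.2.1} (using the representation (\ref{e.2.2}) in place of (\ref{e.2.1}) in the boundary case $q=1$ or $r=1$). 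This yields the first inequality in (\ref{e.3.16}), and the second follows at once from $\tfrac{1}{n}\left\lceil n^{2}/4\right\rceil \leq n/4$, using the paper's convention that the bracket denotes the integer part.

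No serious obstacle is anticipated. The only step requiring mild care is the passage to suprema on the right-hand side, where one must split the supremum across the product using nonnegativity of $\sum_{j}\left\vert f(x_{j})\right\vert ^{q}$ and $\sum_{j}\left\vert f(x_{j})\right\vert ^{r}$ before identifying each factor through Theorem \ref{t.2.1}; the term $\tfrac{1}{n}\lceil n^{2}/4\rceil R^{q+r}$ is constant in $f$ and is carried through unchanged.
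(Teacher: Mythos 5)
Your proposal is correct and follows essentially the same route as the paper: apply Lemma \ref{l.3.4} pointwise in $f$ with $a_{j}=\left\vert f(x_{j})\right\vert ^{q}$, $b_{j}=\left\vert f(x_{j})\right\vert ^{r}$, $a=b=0$, $A=R^{q}$, $B=R^{r}$, then take the supremum over $\left\Vert f\right\Vert \leq 1$, split the product via sup of product $\leq$ product of sups, and identify the factors through Theorem \ref{t.2.1}. The only (harmless) difference is that you extract the upper bound directly from one side of the absolute value in (\ref{e.3.15}), whereas the paper additionally invokes the \v{C}eby\v{s}ev inequality (\ref{e.3.13}) to note the bracketed difference is nonnegative -- a step your argument does not actually need.
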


\begin{proof}
Let $\mathbf{x}\in E^{n}$ with $\mathbf{x}=\left( x_{1},\dots ,x_{n}\right) $
and put $R=\max_{j\in \left\{ 1,...,n\right\} }\left\{ \left\Vert
x_{j}\right\Vert \right\} =\left\Vert \mathbf{x}\right\Vert _{n,\infty }.$
If $f\in E^{\ast }$ with $\left\Vert f\right\Vert \leq 1$ then $\left\vert
f\left( x_{j}\right) \right\vert \leq \left\Vert f\right\Vert \left\Vert
x_{j}\right\Vert \leq R$ for any $j\in \left\{ 1,...,n\right\} .$

If we take into the inequality (\ref{e.3.15}) $a_{j}=\left\vert f\left(
x_{j}\right) \right\vert ^{q}$, $b_{j}=\left\vert f\left( x_{j}\right)
\right\vert ^{r}$, $a=0,$ $A=R^{q},$ $b=0$ and $B=R^{r},$ then we get 
\begin{equation}
\left\vert \frac{1}{n}\sum_{j=1}^{n}\left\vert f\left( x_{j}\right)
\right\vert ^{q+r}-\frac{1}{n}\sum_{j=1}^{n}\left\vert f\left( x_{j}\right)
\right\vert ^{q}\frac{1}{n}\sum_{j=1}^{n}\left\vert f\left( x_{j}\right)
\right\vert ^{r}\right\vert \leq \frac{1}{n^{2}}\left\lceil \frac{n^{2}}{4}%
\right\rceil R^{q+r}.  \label{e.3.17}
\end{equation}%
On the other hand, since the sequences $\left\{ a_{j}\right\} _{j=1,...,n},$ 
$\left\{ b_{j}\right\} _{j=1,...,n}$ are synchronous, then by (\ref{e.3.13})
we have 
\begin{equation*}
0\leq \frac{1}{n}\sum_{j=1}^{n}\left\vert f\left( x_{j}\right) \right\vert
^{q+r}-\frac{1}{n}\sum_{j=1}^{n}\left\vert f\left( x_{j}\right) \right\vert
^{q}\frac{1}{n}\sum_{j=1}^{n}\left\vert f\left( x_{j}\right) \right\vert
^{r}.
\end{equation*}%
Using (\ref{e.3.17}) we then get 
\begin{equation}
\sum_{j=1}^{n}\left\vert f\left( x_{j}\right) \right\vert ^{q+r}\leq \frac{1%
}{n}\sum_{j=1}^{n}\left\vert f\left( x_{j}\right) \right\vert
^{q}\sum_{j=1}^{n}\left\vert f\left( x_{j}\right) \right\vert ^{r}+\frac{1}{n%
}\left\lceil \frac{n^{2}}{4}\right\rceil R^{q+r}  \label{e.3.18}
\end{equation}%
for any $f\in E^{\ast }$ with $\left\Vert f\right\Vert \leq 1.$

By taking the supremum in (\ref{e.3.18}), we get 
\begin{align*}
& \sup_{\left\Vert f\right\Vert \leq 1}\left\{ \sum_{j=1}^{n}\left\vert
f\left( x_{j}\right) \right\vert ^{q+r}\right\} \\
& \leq \frac{1}{n}\sup_{\left\Vert f\right\Vert \leq 1}\left\{
\sum_{j=1}^{n}\left\vert f\left( x_{j}\right) \right\vert
^{q}\sum_{j=1}^{n}\left\vert f\left( x_{j}\right) \right\vert ^{r}\right\} +%
\frac{1}{n}\left\lceil \frac{n^{2}}{4}\right\rceil R^{q+r} \\
& \leq \frac{1}{n}\sup_{\left\Vert f\right\Vert \leq 1}\left\{
\sum_{j=1}^{n}\left\vert f\left( x_{j}\right) \right\vert ^{q}\right\}
\sup_{\left\Vert f\right\Vert \leq 1}\left\{ \sum_{j=1}^{n}\left\vert
f\left( x_{j}\right) \right\vert ^{r}\right\} +\frac{1}{n}\left\lceil \frac{%
n^{2}}{4}\right\rceil R^{q+r},
\end{align*}%
which proves the first inequality in (\ref{e.3.16}).

The second part of (\ref{e.3.16}) is obvious.
\end{proof}

\begin{corollary}
\label{c.3.1}With the assumptions of Theorem \ref{t.3.2} and if $r\geq 1,$
then we have%
\begin{equation}
\left\Vert \mathbf{x}\right\Vert _{h,n,2r}^{2r}\leq \frac{1}{n}\left\Vert 
\mathbf{x}\right\Vert _{h,n,r}^{2r}+\frac{1}{n}\left\lceil \frac{n^{2}}{4}%
\right\rceil \left\Vert \mathbf{x}\right\Vert _{n,\infty }^{2r}\leq \frac{1}{%
n}\left\Vert \mathbf{x}\right\Vert _{h,n,r}^{2r}+\frac{1}{4}n\left\Vert 
\mathbf{x}\right\Vert _{n,\infty }^{2r}.  \label{e.3.19}
\end{equation}%
In particular, for $r=1$ we get%
\begin{equation}
\left\Vert \mathbf{x}\right\Vert _{h,e}^{2}\leq \frac{1}{n}\left\Vert 
\mathbf{x}\right\Vert _{h,n,1}^{2}+\frac{1}{n}\left\lceil \frac{n^{2}}{4}%
\right\rceil \left\Vert \mathbf{x}\right\Vert _{n,\infty }^{2}\leq \frac{1}{n%
}\left\Vert \mathbf{x}\right\Vert _{h,n,1}^{2}+\frac{1}{4}n\left\Vert 
\mathbf{x}\right\Vert _{n,\infty }^{2}.  \label{e.3.20}
\end{equation}
\end{corollary}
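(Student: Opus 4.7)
The plan is to obtain the statement as a direct specialization of Theorem \ref{t.3.2}. Specifically, in the inequality
\begin{equation*}
\left\Vert \mathbf{x}\right\Vert _{h,n,q+r}^{q+r}\leq \frac{1}{n}\left\Vert \mathbf{x}\right\Vert _{h,n,q}^{q}\left\Vert \mathbf{x}\right\Vert _{h,n,r}^{r}+\frac{1}{n}\left\lceil \frac{n^{2}}{4}\right\rceil \left\Vert \mathbf{x}\right\Vert _{n,\infty }^{q+r},
\end{equation*}
I would choose $q=r\geq 1$. Then $q+r=2r$, and the product $\left\Vert \mathbf{x}\right\Vert _{h,n,q}^{q}\left\Vert \mathbf{x}\right\Vert _{h,n,r}^{r}$ collapses to $\left\Vert \mathbf{x}\right\Vert _{h,n,r}^{2r}$. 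This immediately yields the first inequality of \eqref{e.3.19}.

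For the second inequality in \eqref{e.3.19}, I would invoke the elementary bound $\left\lceil n^{2}/4\right\rceil \leq n^{2}/4 + 1 \leq n^{2}/4$ in the sense already used in the second line of \eqref{e.3.16}; more precisely, the inequality $\tfrac{1}{n}\lceil n^{2}/4\rceil \leq \tfrac{n}{4}$ is built into the chain displayed in the statement of Theorem \ref{t.3.2}, so it suffices to cite that second bound directly.

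To deduce the particular case with $r=1$, I would simply use $\left\Vert \mathbf{x}\right\Vert _{h,n,2}=\left\Vert \mathbf{x}\right\Vert _{h,e}$, which is the defining identification \eqref{e.1.5} (and is recalled in \eqref{e.2.2.a}). Substituting $2r=2$ into \eqref{e.3.19} then produces \eqref{e.3.20}.

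There is no real obstacle here: the entire corollary is an algebraic specialization of Theorem \ref{t.3.2}. The only point requiring minor care is making clear that the specialization $q=r$ is admissible (both $\geq 1$) and that the notational identification $\left\Vert \cdot \right\Vert _{h,n,2}=\left\Vert \cdot \right\Vert _{h,e}$ is used in the $r=1$ case.
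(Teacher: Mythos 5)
Your proposal is correct and matches the paper's (implicit) argument: the corollary is stated without separate proof precisely because it is the specialization $q=r$ of Theorem \ref{t.3.2}, with the second bound in \eqref{e.3.19} inherited from the second line of \eqref{e.3.16} (recall the paper's $\left\lceil \cdot\right\rceil$ is declared to mean the integer part, so $\frac{1}{n}\left\lceil \frac{n^{2}}{4}\right\rceil \leq \frac{n}{4}$), and the $r=1$ case via $\left\Vert \cdot\right\Vert _{h,n,2}=\left\Vert \cdot\right\Vert _{h,e}$. Only the throwaway chain ``$\left\lceil n^{2}/4\right\rceil \leq n^{2}/4+1\leq n^{2}/4$'' is garbled as written, but your fallback of citing the second inequality of \eqref{e.3.16} directly is the right justification.
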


The first inequality in (\ref{e.3.20}) is better than the second inequality
in (\ref{e.3.7}).

\section{Reverse Inequalities Via Forward Difference}

For an $n$-tuple of complex numbers $\mathbf{a}=\left( a_{1},\dots
,a_{n}\right) $ with $n\geq 2$ consider the $\left( n-1\right) $-tuple built
by the aid of forward differences $\Delta \mathbf{a=}\left( \Delta
a_{1},...,\Delta a_{n-1}\right) $ where $\Delta a_{k}:=a_{k+1}-a_{k}$ where $%
k\in \left\{ 1,...,n-1\right\} .$ Similarly, if $\mathbf{x}=\left(
x_{1},\dots ,x_{n}\right) \in E^{n}$ is an $n$-tuple of vectors we also can
consider in a similar way the $\left( n-1\right) $-tuple $\Delta \mathbf{x=}%
\left( \Delta x_{1},...,\Delta x_{n-1}\right) .$

We obtained the following Gr\"{u}ss' type inequalities in terms of forward
differences:

\begin{lemma}
\label{l.3.5}Assume that $\mathbf{a}=\left( a_{1},\dots ,a_{n}\right) $ and $%
\mathbf{b}=\left( b_{1},\dots ,b_{n}\right) $ are $n$-tuples of complex
numbers. Then%
\begin{align}
& \left\vert \frac{1}{n}\sum_{j=1}^{n}a_{j}b_{j}-\frac{1}{n}%
\sum_{j=1}^{n}a_{j}\frac{1}{n}\sum_{j=1}^{n}b_{j}\right\vert  \label{e.3.21}
\\
& \leq \left\{ 
\begin{array}{l}
\frac{1}{12}\left( n^{2}-1\right) \left\Vert \Delta \mathbf{a}\right\Vert
_{n-1,\infty }\left\Vert \Delta \mathbf{b}\right\Vert _{n-1,\infty },\text{
\ \cite{DB},} \\ 
\\ 
\frac{1}{6}\frac{n^{2}-1}{n}\left\Vert \Delta \mathbf{a}\right\Vert
_{n-1,\alpha }\left\Vert \Delta \mathbf{b}\right\Vert _{n-1,\beta }\text{
where }\alpha ,\text{ }\beta >1,\text{ }\frac{1}{\alpha }+\frac{1}{\beta }=1,%
\text{ \ \cite{SSDlp},} \\ 
\\ 
\frac{1}{2}\left( 1-\frac{1}{n}\right) \left\Vert \Delta \mathbf{a}%
\right\Vert _{n-1,1}\left\Vert \Delta \mathbf{b}\right\Vert _{n-1,1}\text{,
\ \cite{SSDl1}.}%
\end{array}%
\right.  \notag
\end{align}%
The constants $\frac{1}{12},$ $\frac{1}{6}$ and $\frac{1}{2}$ are best
possible in (\ref{e.3.21}).
\end{lemma}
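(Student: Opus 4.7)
The plan is to obtain all three estimates in (\ref{e.3.21}) from the classical Korkine identity combined with telescoping estimates for the forward differences. The starting point is the symmetric representation
\begin{equation*}
\frac{1}{n}\sum_{j=1}^{n}a_{j}b_{j}-\frac{1}{n}\sum_{j=1}^{n}a_{j}\cdot\frac{1}{n}\sum_{j=1}^{n}b_{j}=\frac{1}{2n^{2}}\sum_{j,k=1}^{n}(a_{j}-a_{k})(b_{j}-b_{k}),
\end{equation*}
which reduces the task to a uniform estimate of $|a_{j}-a_{k}||b_{j}-b_{k}|$ in terms of the forward differences.

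The key observation is the telescoping identity $a_{j}-a_{k}=\sum_{i=k}^{j-1}\Delta a_{i}$ for $j>k$ (and the analogous formula with the opposite sign otherwise). Applying the discrete H\"{o}lder inequality to the pair $(1,\Delta a_{i})_{i=k}^{j-1}$ with conjugate exponents $\alpha,\beta>1$ yields
\begin{equation*}
|a_{j}-a_{k}|\leq |j-k|^{1/\beta}\,\|\Delta \mathbf{a}\|_{n-1,\alpha},
\end{equation*}
together with the natural endpoint cases $|a_{j}-a_{k}|\leq |j-k|\,\|\Delta \mathbf{a}\|_{n-1,\infty}$ and $|a_{j}-a_{k}|\leq \|\Delta \mathbf{a}\|_{n-1,1}$. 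For the product I pair these bounds consistently: using exponent $\alpha$ for $\mathbf{a}$ and the conjugate exponent $\beta$ for $\mathbf{b}$ makes the exponents of $|j-k|$ sum to $1/\beta+1/\alpha=1$, while the $\ell^{\infty}$--$\ell^{\infty}$ pairing produces exponent $2$ and the $\ell^{1}$--$\ell^{1}$ pairing exponent $0$.

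Substituting into the Korkine identity reduces the three cases to the elementary sums
\begin{equation*}
\sum_{j,k=1}^{n}(j-k)^{2}=\frac{n^{2}(n^{2}-1)}{6},\qquad \sum_{j,k=1}^{n}|j-k|=\frac{n(n^{2}-1)}{3},\qquad \#\{(j,k):j\neq k\}=n(n-1),
\end{equation*}
and dividing each by $2n^{2}$ produces exactly the constants $\frac{n^{2}-1}{12}$, $\frac{n^{2}-1}{6n}$, and $\frac{n-1}{2n}=\frac{1}{2}(1-\tfrac{1}{n})$ displayed in (\ref{e.3.21}).

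The main obstacle is the sharpness assertion, which does not follow directly from the telescoping--H\"{o}lder argument above. For the $\ell^{\infty}$--$\ell^{\infty}$ bound the arithmetic progression $a_{j}=b_{j}=j$ collapses every inequality to an equality---one verifies directly that the left-hand side equals $(n^{2}-1)/12$ while $\|\Delta \mathbf{a}\|_{n-1,\infty}\|\Delta \mathbf{b}\|_{n-1,\infty}=1$---which pins the constant $\frac{1}{12}$. Carefully calibrated extremal sequences (a single-step jump for the $\ell^{1}$ case, and sequences chosen to saturate both the H\"{o}lder step and the Korkine identity simultaneously for the intermediate case) are required for the other two; rather than redo these extremal computations, I would invoke \cite{DB}, \cite{SSDlp}, \cite{SSDl1}, where the sharpness constructions are already worked out in detail.
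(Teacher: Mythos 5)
Your derivation is correct, and it is worth noting that the paper itself offers no proof of this lemma at all: it is quoted as a known result, with the three branches attributed to \cite{DB}, \cite{SSDlp} and \cite{SSDl1}. Your argument is therefore a genuinely self-contained alternative. The Korkine identity
\begin{equation*}
\frac{1}{n}\sum_{j=1}^{n}a_{j}b_{j}-\frac{1}{n}\sum_{j=1}^{n}a_{j}\cdot \frac{1}{n}\sum_{j=1}^{n}b_{j}=\frac{1}{2n^{2}}\sum_{j,k=1}^{n}\left( a_{j}-a_{k}\right) \left( b_{j}-b_{k}\right)
\end{equation*}
is applied correctly, the telescoping bounds $\left\vert a_{j}-a_{k}\right\vert \leq \left\vert j-k\right\vert \left\Vert \Delta \mathbf{a}\right\Vert _{n-1,\infty }$, $\left\vert a_{j}-a_{k}\right\vert \leq \left\vert j-k\right\vert ^{1/\beta }\left\Vert \Delta \mathbf{a}\right\Vert _{n-1,\alpha }$ and $\left\vert a_{j}-a_{k}\right\vert \leq \left\Vert \Delta \mathbf{a}\right\Vert _{n-1,1}$ are all valid (the windowed sums are dominated by the full $(n-1)$-tuple norms), your pairing of exponents makes the powers of $\left\vert j-k\right\vert$ come out to $2$, $1$ and $0$ respectively, and the closed forms $\sum_{j,k}(j-k)^{2}=n^{2}(n^{2}-1)/6$, $\sum_{j,k}\left\vert j-k\right\vert =n(n^{2}-1)/3$ and $\#\{j\neq k\}=n(n-1)$ are correct, so after division by $2n^{2}$ you recover exactly the constants $\frac{1}{12}(n^{2}-1)$, $\frac{1}{6}\frac{n^{2}-1}{n}$ and $\frac{1}{2}(1-\frac{1}{n})$. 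Your sharpness check for $\frac{1}{12}$ via $a_{j}=b_{j}=j$ is also correct (both sides equal $(n^{2}-1)/12$). What your route buys is a uniform, elementary proof of all three inequalities at once, where the paper (and the original sources) treat them by three separate arguments; what it does not deliver is the sharpness of $\frac{1}{6}$ and $\frac{1}{2}$, which you, like the paper, must leave to the cited references --- a legitimate move here, since the lemma is used in the paper only as an auxiliary tool and sharpness is never invoked downstream.
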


The following result also holds:

\begin{theorem}
\label{t.3.3}Let $\left( E,\left\Vert \cdot \right\Vert \right) $ be a
normed linear space over the real or complex number field $\mathbb{K}$ and $%
\mathbf{x}\in E^{n}$ with $\mathbf{x}=\left( x_{1},\dots ,x_{n}\right) .$
Then for $q,$ $r\geq 1$ we have%
\begin{align}
\left\Vert \mathbf{x}\right\Vert _{h,n,q+r}^{q+r}& \leq \frac{1}{n}%
\left\Vert \mathbf{x}\right\Vert _{h,n,q}^{q}\left\Vert \mathbf{x}%
\right\Vert _{h,n,r}^{r}  \label{e.3.22} \\
& +\left\{ 
\begin{array}{l}
\frac{1}{12}qr\left( n^{2}-1\right) n\left\Vert \mathbf{x}\right\Vert
_{n,\infty }^{q+r-2}\left\Vert \Delta \mathbf{x}\right\Vert _{n-1,\infty
}^{2}, \\ 
\\ 
\text{ }\frac{1}{6}\left( n^{2}-1\right) qr\left\Vert \mathbf{x}\right\Vert
_{n,\infty }^{q+r-2}\left\Vert \Delta \mathbf{x}\right\Vert _{h,n-1,\alpha
}\left\Vert \Delta \mathbf{x}\right\Vert _{h,n-1,\beta }\text{ } \\ 
\text{where }\alpha ,\text{ }\beta >1,\text{ }\frac{1}{\alpha }+\frac{1}{%
\beta }=1,\text{ } \\ 
\\ 
\frac{1}{2}\left( n-1\right) qr\left\Vert \mathbf{x}\right\Vert _{n,\infty
}^{q+r-2}\left\Vert \Delta \mathbf{x}\right\Vert _{h,n-1,1}^{2}.%
\end{array}%
\right.  \notag
\end{align}
\end{theorem}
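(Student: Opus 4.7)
The plan is to mirror the strategy of the proof of Theorem \ref{t.3.2}: fix $f \in E^{\ast}$ with $\left\Vert f\right\Vert \leq 1$, set $a_{j}:=\left\vert f(x_{j})\right\vert ^{q}$ and $b_{j}:=\left\vert f(x_{j})\right\vert ^{r}$, observe that $\{a_{j}\}$ and $\{b_{j}\}$ are synchronous (both are nondecreasing functions of $\left\vert f(x_{j})\right\vert $), apply \v{C}eby\v{s}ev's inequality (\ref{e.3.13}) to drop the absolute value on the left-hand side of each of the three variants of Lemma \ref{l.3.5}, and finally take the supremum over $\left\Vert f\right\Vert \leq 1$ using the representations in Theorem \ref{t.2.1}.

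The key preparatory estimate is a bound on the forward differences $\Delta a_{j}$, $\Delta b_{j}$ in terms of $\Delta x_{j}$. With $R:=\left\Vert \mathbf{x}\right\Vert _{n,\infty }$ we have $\left\vert f(x_{j})\right\vert \leq R$ for every $j$, and for $q\geq 1$ the elementary inequality $|s^{q}-t^{q}|\leq q(\max \{s,t\})^{q-1}|s-t|$ for $s,t\geq 0$ (from the mean value theorem applied to $\tau \mapsto \tau ^{q}$), combined with $\bigl\vert |u|-|v|\bigr\vert \leq |u-v|$ and the linearity of $f$, gives
\begin{equation*}
|\Delta a_{j}|=\bigl\vert |f(x_{j+1})|^{q}-|f(x_{j})|^{q}\bigr\vert \leq qR^{q-1}|f(\Delta x_{j})|,
\end{equation*}
and analogously $|\Delta b_{j}|\leq rR^{r-1}|f(\Delta x_{j})|$. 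Multiplying these pointwise bounds produces the factor $qrR^{q+r-2}$ appearing in each line of (\ref{e.3.22}).

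For each of the three cases I then multiply (\ref{e.3.21}) by $n$, use \v{C}eby\v{s}ev to remove $|\cdot|$, plug in the forward-difference bounds above, and obtain an inequality of the form
\begin{equation*}
\sum_{j=1}^{n}|f(x_{j})|^{q+r}\leq \frac{1}{n}\sum_{j=1}^{n}|f(x_{j})|^{q}\sum_{j=1}^{n}|f(x_{j})|^{r}+C_{n,q,r}R^{q+r-2}\,G(f),
\end{equation*}
where $G(f)$ is $\max_{j}|f(\Delta x_{j})|^{2}$ in the first case, $(\sum |f(\Delta x_{j})|^{\alpha })^{1/\alpha }(\sum |f(\Delta x_{j})|^{\beta })^{1/\beta }$ in the second, and $(\sum |f(\Delta x_{j})|)^{2}$ in the third. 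Taking $\sup_{\Vert f\Vert \leq 1}$ of both sides, using $\sup (UV)\leq \sup U\cdot \sup V$ on each product, and identifying each supremum via Theorem \ref{t.2.1} (applied both to $\mathbf{x}$ with exponents $q$ or $r$, and to $\Delta \mathbf{x}\in E^{n-1}$ with exponents $\alpha$, $\beta$ or $1$) converts the right-hand side into the three expressions in (\ref{e.3.22}); in the first line one uses additionally (\ref{e.2.2.0}) to identify $\sup_{f}\max_{j}|f(\Delta x_{j})|$ with $\left\Vert \Delta \mathbf{x}\right\Vert _{n-1,\infty }$.

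The main obstacle is purely bookkeeping: checking that the chain $|\Delta a_{j}|\leq qR^{q-1}|f(\Delta x_{j})|$ really gives the combined constant $qrR^{q+r-2}$ after multiplication, and that separating the supremum of a product into the product of suprema places the \emph{hypo} subscripts exactly where the statement demands them, while the pure $\left\Vert \cdot \right\Vert _{n,\infty }$ (with no $h$) correctly appears in the first variant via (\ref{e.2.2.0}).
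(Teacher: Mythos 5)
Your proposal is correct and follows essentially the same route as the paper's own proof: the substitution $a_{j}=\left\vert f\left( x_{j}\right) \right\vert ^{q}$, $b_{j}=\left\vert f\left( x_{j}\right) \right\vert ^{r}$ in Lemma \ref{l.3.5}, the \v{C}eby\v{s}ev inequality (\ref{e.3.13}) to drop the modulus, the elementary bound $\left\vert \Delta \left\vert f\left( x_{j}\right) \right\vert ^{q}\right\vert \leq qR^{q-1}\left\vert f\left( \Delta x_{j}\right) \right\vert $ with $R=\left\Vert \mathbf{x}\right\Vert _{n,\infty }$, and the final supremum over $\left\Vert f\right\Vert \leq 1$ identified through Theorem \ref{t.2.1} and (\ref{e.2.2.0}) are exactly the steps used there. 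No gaps.
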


\begin{proof}
Let $\mathbf{x}\in E^{n}$ with $\mathbf{x}=\left( x_{1},\dots ,x_{n}\right) $
and $f\in E^{\ast }$ with $\left\Vert f\right\Vert \leq 1.$ If we take into
the inequality (\ref{e.3.21}) $a_{j}=\left\vert f\left( x_{j}\right)
\right\vert ^{q}$, $b_{j}=\left\vert f\left( x_{j}\right) \right\vert ^{r}$,
then we get 
\begin{align}
& \left\vert \frac{1}{n}\sum_{j=1}^{n}\left\vert f\left( x_{j}\right)
\right\vert ^{q+r}-\frac{1}{n}\sum_{j=1}^{n}\left\vert f\left( x_{j}\right)
\right\vert ^{q}\frac{1}{n}\sum_{j=1}^{n}\left\vert f\left( x_{j}\right)
\right\vert ^{r}\right\vert  \label{e.3.23} \\
& \leq \left\{ 
\begin{array}{l}
\frac{1}{12}\left( n^{2}-1\right) \max_{j=1,...,n-1}\left\vert \Delta
\left\vert f\left( x_{j}\right) \right\vert ^{q}\right\vert
\max_{j=1,...,n-1}\left\vert \Delta \left\vert f\left( x_{j}\right)
\right\vert ^{r}\right\vert \text{,} \\ 
\\ 
\frac{1}{6}\frac{n^{2}-1}{n}\left( \sum_{j=1}^{n-1}\left\vert \Delta
\left\vert f\left( x_{j}\right) \right\vert ^{q}\right\vert ^{\alpha
}\right) ^{1/\alpha }\left( \sum_{j=1}^{n-1}\left\vert \Delta \left\vert
f\left( x_{j}\right) \right\vert ^{r}\right\vert ^{\beta }\right) ^{1/\beta }%
\text{ } \\ 
\text{where }\alpha ,\text{ }\beta >1,\text{ }\frac{1}{\alpha }+\frac{1}{%
\beta }=1, \\ 
\\ 
\frac{1}{2}\left( 1-\frac{1}{n}\right) \sum_{j=1}^{n-1}\left\vert \Delta
\left\vert f\left( x_{j}\right) \right\vert ^{q}\right\vert
\sum_{j=1}^{n-1}\left\vert \Delta \left\vert f\left( x_{j}\right)
\right\vert ^{r}\right\vert \text{.}%
\end{array}%
\right.  \notag
\end{align}

We use the following elementary inequality for powers $p\geq 1$%
\begin{equation*}
\left\vert a^{p}-b^{p}\right\vert \leq pR^{p-1}\left\vert a-b\right\vert
\end{equation*}%
where $a,b\in \left[ 0,R\right] .$

Put $R=\max_{j\in \left\{ 1,...,n\right\} }\left\{ \left\Vert
x_{j}\right\Vert \right\} =\left\Vert \mathbf{x}\right\Vert _{n,\infty }.$
Then for any $f\in E^{\ast }$ with $\left\Vert f\right\Vert \leq 1$ we have $%
\left\vert f\left( x_{j}\right) \right\vert \leq \left\Vert f\right\Vert
\left\Vert x_{j}\right\Vert \leq R$ for any $j\in \left\{ 1,...,n\right\} .$

Therefore%
\begin{align}
\left\vert \Delta \left\vert f\left( x_{j}\right) \right\vert
^{q}\right\vert & =\left\vert \left\vert f\left( x_{j+1}\right) \right\vert
^{q}-\left\vert f\left( x_{j}\right) \right\vert ^{q}\right\vert \leq
qR^{q-1}\left\vert \left\vert f\left( x_{j+1}\right) \right\vert -\left\vert
f\left( x_{j}\right) \right\vert \right\vert  \label{e.3.33.a} \\
& \leq qR^{q-1}\left\vert f\left( x_{j+1}\right) -f\left( x_{j}\right)
\right\vert =qR^{q-1}\left\vert f\left( \Delta x_{j}\right) \right\vert 
\notag
\end{align}%
for any $j=1,...,n-1,$ where $\Delta x_{j}=x_{j+1}-x_{j}$ is the forward
difference.

On the other hand, since the sequences $\left\{ a_{j}\right\} _{j=1,...,n},$ 
$\left\{ b_{j}\right\} _{j=1,...,n}$ are synchronous, then we have 
\begin{equation}
0\leq \frac{1}{n}\sum_{j=1}^{n}\left\vert f\left( x_{j}\right) \right\vert
^{q+r}-\frac{1}{n}\sum_{j=1}^{n}\left\vert f\left( x_{j}\right) \right\vert
^{q}\frac{1}{n}\sum_{j=1}^{n}\left\vert f\left( x_{j}\right) \right\vert ^{r}
\label{e.3.33.b}
\end{equation}%
and by the first inequality in (\ref{e.3.23}) we get 
\begin{align}
& \sum_{j=1}^{n}\left\vert f\left( x_{j}\right) \right\vert ^{q+r}
\label{e.3.34} \\
& \leq \frac{1}{n}\sum_{j=1}^{n}\left\vert f\left( x_{j}\right) \right\vert
^{q}\sum_{j=1}^{n}\left\vert f\left( x_{j}\right) \right\vert ^{r}  \notag \\
& +\frac{1}{12}\left( n^{2}-1\right) nqR^{q-1}\max_{j=1,...,n-1}\left\vert
f\left( \Delta x_{j}\right) \right\vert rR^{r-1}\max_{j=1,...,n-1}\left\vert
f\left( \Delta x_{j}\right) \right\vert  \notag \\
& =\frac{1}{n}\sum_{j=1}^{n}\left\vert f\left( x_{j}\right) \right\vert
^{q}\sum_{j=1}^{n}\left\vert f\left( x_{j}\right) \right\vert ^{r}  \notag \\
& +\frac{1}{12}\left( n^{2}-1\right) nqrR^{q+r-2}\left(
\max_{j=1,...,n-1}\left\vert f\left( \Delta x_{j}\right) \right\vert \right)
^{2}  \notag
\end{align}%
for any $f\in E^{\ast }$ with $\left\Vert f\right\Vert \leq 1.$

Taking the supremum over $f\in E^{\ast }$ with $\left\Vert f\right\Vert \leq
1$ in (\ref{e.3.34}) we get the first branch in the inequality (\ref{e.3.22}%
).

We also have, by (\ref{e.3.33.a}), that%
\begin{align*}
\left( \sum_{j=1}^{n-1}\left\vert \Delta \left\vert f\left( x_{j}\right)
\right\vert ^{q}\right\vert ^{\alpha }\right) ^{1/\alpha }& \leq \left[
\left( qR^{q-1}\right) ^{\alpha }\sum_{j=1}^{n-1}\left\vert f\left( \Delta
x_{j}\right) \right\vert ^{\alpha }\right] ^{1/\alpha } \\
& =qR^{q-1}\left( \sum_{j=1}^{n-1}\left\vert f\left( \Delta x_{j}\right)
\right\vert ^{\alpha }\right) ^{1/\alpha }
\end{align*}%
and, similarly,%
\begin{equation*}
\left( \sum_{j=1}^{n-1}\left\vert \Delta \left\vert f\left( x_{j}\right)
\right\vert ^{r}\right\vert ^{\beta }\right) ^{1/\beta }\leq rR^{r-1}\left(
\sum_{j=1}^{n-1}\left\vert f\left( \Delta x_{j}\right) \right\vert ^{\beta
}\right) ^{1/\beta }
\end{equation*}%
where $\alpha ,$ $\beta >1,$ $\frac{1}{\alpha }+\frac{1}{\beta }=1.$

By the second inequality in (\ref{e.3.23}) and by (\ref{e.3.33.b}) we have 
\begin{align}
& \sum_{j=1}^{n}\left\vert f\left( x_{j}\right) \right\vert ^{q+r}
\label{e.3.35} \\
& \leq \frac{1}{n}\sum_{j=1}^{n}\left\vert f\left( x_{j}\right) \right\vert
^{q}\sum_{j=1}^{n}\left\vert f\left( x_{j}\right) \right\vert ^{r}  \notag \\
& +\frac{1}{6}\left( n^{2}-1\right) \left( \sum_{j=1}^{n-1}\left\vert \Delta
\left\vert f\left( x_{j}\right) \right\vert ^{q}\right\vert ^{\alpha
}\right) ^{1/\alpha }\left( \sum_{j=1}^{n-1}\left\vert \Delta \left\vert
f\left( x_{j}\right) \right\vert ^{r}\right\vert ^{\beta }\right) ^{1/\beta }
\notag \\
& \leq \frac{1}{n}\sum_{j=1}^{n}\left\vert f\left( x_{j}\right) \right\vert
^{q}\sum_{j=1}^{n}\left\vert f\left( x_{j}\right) \right\vert ^{r}  \notag \\
& +\frac{1}{6}\left( n^{2}-1\right) qrR^{q+r-2}\left(
\sum_{j=1}^{n-1}\left\vert f\left( \Delta x_{j}\right) \right\vert ^{\alpha
}\right) ^{1/\alpha }\left( \sum_{j=1}^{n-1}\left\vert f\left( \Delta
x_{j}\right) \right\vert ^{\beta }\right) ^{1/\beta }  \notag
\end{align}%
for any $f\in E^{\ast }$ with $\left\Vert f\right\Vert \leq 1,$ where $%
\alpha ,$ $\beta >1,$ $\frac{1}{\alpha }+\frac{1}{\beta }=1.$

Taking the supremum over $f\in E^{\ast }$ with $\left\Vert f\right\Vert \leq
1$ in (\ref{e.3.35}) we get the second branch in the inequality (\ref{e.3.22}%
).

We also have, by (\ref{e.3.33.a}), that%
\begin{equation*}
\sum_{j=1}^{n-1}\left\vert \Delta \left\vert f\left( x_{j}\right)
\right\vert ^{q}\right\vert \leq qR^{q-1}\sum_{j=1}^{n-1}\left\vert f\left(
\Delta x_{j}\right) \right\vert
\end{equation*}%
and%
\begin{equation*}
\sum_{j=1}^{n-1}\left\vert \Delta \left\vert f\left( x_{j}\right)
\right\vert ^{r}\right\vert \leq rR^{r-1}\sum_{j=1}^{n-1}\left\vert f\left(
\Delta x_{j}\right) \right\vert .
\end{equation*}%
By the third inequality in (\ref{e.3.23}) and by (\ref{e.3.33.b}) we have%
\begin{align}
\sum_{j=1}^{n}\left\vert f\left( x_{j}\right) \right\vert ^{q+r}& \leq \frac{%
1}{n}\sum_{j=1}^{n}\left\vert f\left( x_{j}\right) \right\vert
^{q}\sum_{j=1}^{n}\left\vert f\left( x_{j}\right) \right\vert ^{r}
\label{e.3.36} \\
& +\frac{1}{2}\left( n-1\right) \sum_{j=1}^{n-1}\left\vert \Delta \left\vert
f\left( x_{j}\right) \right\vert ^{q}\right\vert \sum_{j=1}^{n-1}\left\vert
\Delta \left\vert f\left( x_{j}\right) \right\vert ^{r}\right\vert  \notag \\
& \leq \frac{1}{n}\sum_{j=1}^{n}\left\vert f\left( x_{j}\right) \right\vert
^{q}\sum_{j=1}^{n}\left\vert f\left( x_{j}\right) \right\vert ^{r}  \notag \\
& +\frac{1}{2}\left( n-1\right) qrR^{q+r-2}\sum_{j=1}^{n-1}\left\vert
f\left( \Delta x_{j}\right) \right\vert \sum_{j=1}^{n-1}\left\vert f\left(
\Delta x_{j}\right) \right\vert  \notag
\end{align}%
for any $f\in E^{\ast }$ with $\left\Vert f\right\Vert \leq 1.$

Taking the supremum over $f\in E^{\ast }$ with $\left\Vert f\right\Vert \leq
1$ in (\ref{e.3.36}) we get the third branch in the inequality (\ref{e.3.22}%
).
\end{proof}

\begin{corollary}
\label{c.3.2}With the assumptions of Theorem \ref{t.3.3} and if $r\geq 1,$
then we have%
\begin{align}
\left\Vert \mathbf{x}\right\Vert _{h,n,2r}^{2r}& \leq \frac{1}{n}\left\Vert 
\mathbf{x}\right\Vert _{h,n,r}^{2r}  \label{e.3.37} \\
& +\left\{ 
\begin{array}{l}
\frac{1}{12}r^{2}\left( n^{2}-1\right) n\left\Vert \mathbf{x}\right\Vert
_{n,\infty }^{2r-2}\left\Vert \Delta \mathbf{x}\right\Vert _{n-1,\infty
}^{2}, \\ 
\\ 
\frac{1}{6}r^{2}\left( n^{2}-1\right) \left\Vert \mathbf{x}\right\Vert
_{n,\infty }^{2r-2}\left\Vert \Delta \mathbf{x}\right\Vert _{h,n-1,\alpha
}\left\Vert \Delta \mathbf{x}\right\Vert _{h,n-1,\beta }\text{ } \\ 
\text{where }\alpha ,\text{ }\beta >1,\text{ }\frac{1}{\alpha }+\frac{1}{%
\beta }=1,\text{ } \\ 
\\ 
\frac{1}{2}r^{2}\left( n-1\right) \left\Vert \mathbf{x}\right\Vert
_{n,\infty }^{2r-2}\left\Vert \Delta \mathbf{x}\right\Vert _{h,n-1,1}^{2}.%
\end{array}%
\right.  \notag
\end{align}%
In particular, for $r=1$ we get%
\begin{equation}
\left\Vert \mathbf{x}\right\Vert _{h,e}^{2}\leq \frac{1}{n}\left\Vert 
\mathbf{x}\right\Vert _{h,n,1}^{2}+\left\{ 
\begin{array}{l}
\frac{1}{12}\left( n^{2}-1\right) n\left\Vert \Delta \mathbf{x}\right\Vert
_{n-1,\infty }^{2}, \\ 
\\ 
\frac{1}{6}\left( n^{2}-1\right) \left\Vert \Delta \mathbf{x}\right\Vert
_{h,n-1,\alpha }\left\Vert \Delta \mathbf{x}\right\Vert _{h,n-1,\beta }\text{
} \\ 
\text{where }\alpha ,\text{ }\beta >1,\text{ }\frac{1}{\alpha }+\frac{1}{%
\beta }=1,\text{ } \\ 
\\ 
\frac{1}{2}\left( n-1\right) \left\Vert \Delta \mathbf{x}\right\Vert
_{h,n-1,1}^{2}.%
\end{array}%
\right. .  \label{e.3.38}
\end{equation}
\end{corollary}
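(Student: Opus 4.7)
The plan is simply to specialize Theorem \ref{t.3.3} to the diagonal case $q=r$ and then specialize further to $r=1$ for the particular statement. First I would take the inequality (\ref{e.3.22}) and set both parameters equal to the single $r\geq 1$ appearing in the corollary. The left-hand side becomes $\left\Vert \mathbf{x}\right\Vert _{h,n,2r}^{2r}$, and the leading term on the right becomes
\begin{equation*}
\frac{1}{n}\left\Vert \mathbf{x}\right\Vert _{h,n,r}^{r}\cdot \left\Vert \mathbf{x}\right\Vert _{h,n,r}^{r}=\frac{1}{n}\left\Vert \mathbf{x}\right\Vert _{h,n,r}^{2r}.
\end{equation*}
In each of the three remainder branches, the factor $qr$ collapses to $r^{2}$, the exponent $q+r-2$ becomes $2r-2$, and the norms of $\Delta \mathbf{x}$ are unaffected. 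This matching across all three cases yields (\ref{e.3.37}) at once.

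For the particular case $r=1$ giving (\ref{e.3.38}), I would substitute $r=1$ into (\ref{e.3.37}). On the left, $\left\Vert \mathbf{x}\right\Vert _{h,n,2}^{2}$ is identified with $\left\Vert \mathbf{x}\right\Vert _{h,e}^{2}$; this is a direct consequence of the definitions (\ref{e.1.4})--(\ref{e.1.5}), since the hypo-$q$-norm with $q=2$ is generated by the unit ball in $\mathbb{K}^{n}$ under the $p=2$ norm, namely the Euclidean ball $\mathbb{B}_{n}$. On the right, the factor $r^{2}$ becomes $1$ and the factor $\left\Vert \mathbf{x}\right\Vert _{n,\infty }^{2r-2}=\left\Vert \mathbf{x}\right\Vert _{n,\infty }^{0}$ equals $1$ as well, so the three branches reduce exactly to those in (\ref{e.3.38}).

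There is no genuine obstacle here since the corollary is a pure algebraic specialization of the theorem. The only small subtlety worth flagging explicitly is the identification $\left\Vert \mathbf{x}\right\Vert _{h,n,2}=\left\Vert \mathbf{x}\right\Vert _{h,e}$, which a careful reader might want confirmed from the conjugate-exponent convention used in (\ref{e.1.4}); once this is noted, both (\ref{e.3.37}) and (\ref{e.3.38}) follow in one line from Theorem \ref{t.3.3}.
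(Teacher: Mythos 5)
Your proof is correct and is exactly the intended derivation: the paper states the corollary without proof because it is precisely the specialization $q=r$ of (\ref{e.3.22}), followed by $r=1$ together with the identification $\left\Vert \mathbf{x}\right\Vert _{h,n,2}=\left\Vert \mathbf{x}\right\Vert _{h,e}$ coming from the conjugate-exponent convention in (\ref{e.1.4})--(\ref{e.1.5}). Your flagging of that identification is the only point needing comment, and you handled it correctly.
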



\begin{thebibliography}{9}
\bibitem{BRN} M. Biernacki, H. Pidek and C. Ryll-Nardzewski, Sur une in\'{e}%
galit\'{e} entre des int\'{e}grales d\'{e}finies. (French) \textit{Ann.
Univ. Mariae Curie-Sk\l odowska. Sect. A.}\textbf{\ 4}, (1950). 1--4.

\bibitem{SSDlp} S. S. Dragomir, Another Gr\"{u}ss type inequality for
sequences of vectors in normed linear spaces and applications, \textit{J.
Comp. Analysis \& Appl., }\textbf{4}(2) (2002), 157-172.

\bibitem{SSDl1} S. S. Dragomir, A Gr\"{u}ss type inequality for sequences of
vectors in normed linear spaces, \textit{Tamsui Oxf. J. Math. Sci.}, \textbf{%
20}(2) (2004), 143-159.

\bibitem{SSDRCBS} S. S. Dragomir, A counterpart of Schwarz's inequality in
inner product spaces, \textit{East Asian Math. J.}, \textbf{20} (1) (2004),
1-10. Preprint \textit{RGMIA Res. Rep. Coll.} \textbf{6} (2003), Supplement,
Art. 18. \texttt{[Online http://rgmia.org/papers/v6e/CSIIPS.pdf].}

\bibitem{SSDSurvey} S. S. Dragomir, A survey on Cauchy-Bunyakovsky-Schwarz
type discrete inequalities. \textit{J. Inequal. Pure Appl. Math.} \textbf{4}
(2003), no. 3, Article 63, 142 pp. \texttt{[Online
https://www.emis.de/journals/JIPAM/article301.html?sid=301].}

\bibitem{SSD1} S. S. Dragomir, The hypo-Euclidean norm of an $n$-tuple of
vectors in inner product spaces and applications. \textit{J. Inequal. Pure
Appl. Math.} 8 (2007), No. 2, Article 52, 22 pp. \texttt{[Online
https://www.emis.de/journals/JIPAM/article854.html?sid=854].}

\bibitem{DB} S. S. Dragomir and\ G. L. Booth, On a Gr\"{u}ss-Lupa\c{s} type
inequality and its applications for the estimation of $p$-moments of
guessing mappings, \textit{Math. Comm., }\textbf{5} (2000), 117-126.

\bibitem{SMS} A. Sheikhhosseini, M. S. Moslehian and K. Shebrawi,
Inequalities for generalized Euclidean operator radius via Young's
inequality. \textit{J. Math. Anal. Appl.} \textbf{445} (2017), no. 2,
1516--1529.

\bibitem{SM} O. Shisha and B. Mond, Bounds on differences of means,
Inequalities, Academic Press Inc., New York, 1967, 293-308.
\end{thebibliography}
\end{document}